\title[]{Maximal families of Calabi-Yau manifolds with minimal length Yukawa coupling}
\author[Mao Sheng]{Mao Sheng}
\author[Jinxing Xu]{Jinxing Xu}
\email{msheng@ustc.edu.cn}\email{xujx02@ustc.edu.cn}
\address{School of Mathematical Sciences,
University of Science and Technology of China, Hefei, 230026, China}
\author[Kang Zuo]{Kang Zuo}
\email{zuok@uni-mainz.de}
\address{Institut f\"{u}r  Mathematik, Universit\"{a}t
Mainz, Mainz, 55099, Germany}
\begin{document}
\theoremstyle{plain}
\newtheorem{thm}{Theorem}[section]
\newtheorem{theorem}[thm]{Theorem}
\newtheorem{lemma}[thm]{Lemma}
\newtheorem{corollary}[thm]{Corollary}
\newtheorem{proposition}[thm]{Proposition}
\newtheorem{addendum}[thm]{Addendum}
\newtheorem{variant}[thm]{Variant}
\theoremstyle{definition}
\newtheorem{construction}[thm]{Construction}
\newtheorem{notations}[thm]{Notations}
\newtheorem{question}[thm]{Question}
\newtheorem{problem}[thm]{Problem}
\newtheorem{remark}[thm]{Remark}
\newtheorem{remarks}[thm]{Remarks}
\newtheorem{definition}[thm]{Definition}
\newtheorem{claim}[thm]{Claim}
\newtheorem{assumption}[thm]{Assumption}
\newtheorem{assumptions}[thm]{Assumptions}
\newtheorem{properties}[thm]{Properties}
\newtheorem{example}[thm]{Example}
\newtheorem{conjecture}[thm]{Conjecture}
\numberwithin{equation}{thm}

\newcommand{\sA}{{\mathcal A}}
\newcommand{\sB}{{\mathcal B}}
\newcommand{\sC}{{\mathcal C}}
\newcommand{\sD}{{\mathcal D}}
\newcommand{\sE}{{\mathcal E}}
\newcommand{\sF}{{\mathcal F}}
\newcommand{\sG}{{\mathcal G}}
\newcommand{\sH}{{\mathcal H}}
\newcommand{\sI}{{\mathcal I}}
\newcommand{\sJ}{{\mathcal J}}
\newcommand{\sK}{{\mathcal K}}
\newcommand{\sL}{{\mathcal L}}
\newcommand{\sM}{{\mathcal M}}
\newcommand{\sN}{{\mathcal N}}
\newcommand{\sO}{{\mathcal O}}
\newcommand{\sP}{{\mathcal P}}
\newcommand{\sQ}{{\mathcal Q}}
\newcommand{\sR}{{\mathcal R}}
\newcommand{\sS}{{\mathcal S}}
\newcommand{\sT}{{\mathcal T}}
\newcommand{\sU}{{\mathcal U}}
\newcommand{\sV}{{\mathcal V}}
\newcommand{\sW}{{\mathcal W}}
\newcommand{\sX}{{\mathcal X}}
\newcommand{\sY}{{\mathcal Y}}
\newcommand{\sZ}{{\mathcal Z}}
\newcommand{\A}{{\mathbb A}}
\newcommand{\B}{{\mathbb B}}
\newcommand{\C}{{\mathbb C}}
\newcommand{\D}{{\mathbb D}}
\newcommand{\E}{{\mathbb E}}
\newcommand{\F}{{\mathbb F}}
\newcommand{\G}{{\mathbb G}}
\newcommand{\HH}{{\mathbb H}}
\newcommand{\I}{{\mathbb I}}
\newcommand{\J}{{\mathbb J}}
\renewcommand{\L}{{\mathbb L}}
\newcommand{\M}{{\mathbb M}}
\newcommand{\N}{{\mathbb N}}
\renewcommand{\P}{{\mathbb P}}
\newcommand{\Q}{{\mathbb Q}}
\newcommand{\R}{{\mathbb R}}
\newcommand{\SSS}{{\mathbb S}}
\newcommand{\T}{{\mathbb T}}
\newcommand{\U}{{\mathbb U}}
\newcommand{\V}{{\mathbb V}}
\newcommand{\W}{{\mathbb W}}
\newcommand{\X}{{\mathbb X}}
\newcommand{\Y}{{\mathbb Y}}
\newcommand{\Z}{{\mathbb Z}}
\newcommand{\id}{{\rm id}}
\newcommand{\rank}{{\rm rank}}
\newcommand{\END}{{\mathbb E}{\rm nd}}
\newcommand{\End}{{\rm End}}
\newcommand{\Hom}{{\rm Hom}}
\newcommand{\Hg}{{\rm Hg}}
\newcommand{\tr}{{\rm tr}}
\newcommand{\Sl}{{\rm Sl}}
\newcommand{\Gl}{{\rm Gl}}
\newcommand{\Cor}{{\rm Cor}}
\newcommand{\Aut}{\mathrm{Aut}}
\newcommand{\Sym}{\mathrm{Sym}}
\newcommand{\ModuliCY}{\mathfrak{M}_{CY}}
\newcommand{\HyperCY}{\mathfrak{H}_{CY}}
\newcommand{\ModuliAR}{\mathfrak{M}_{AR}}
\newcommand{\Modulione}{\mathfrak{M}_{1,n+3}}
\newcommand{\Modulin}{\mathfrak{M}_{n,n+3}}
\newcommand{\Gal}{\mathrm{Gal}}
\newcommand{\Spec}{\mathrm{Spec}}
\newcommand{\Jac}{\mathrm{Jac}}
\newcommand{\Proj}{\mathrm{Proj}}

\thanks{This work is supported by the SFB/TR 45 `Periods, Moduli
Spaces and Arithmetic of Algebraic Varieties' of the DFG, and
partially supported by the University of Science and Technology of
China.} \maketitle

\begin{abstract}
For each natural odd number $n\geq 3$, we exhibit a maximal family
of $n$-dimensional Calabi-Yau manifolds whose Yukawa coupling length
is one. As a consequence, Shafarevich's conjecture holds true for
these families. Moreover, it follows from Deligne-Mostow \cite{DM}
and Mostow \cite{Mostow0}-\cite{Mostow} that, for $n=3$, it can be
partially compactified to a Shimura family of ball type, and for
$n=5, \ 9$, there is a sub $\Q$-PVHS of the family uniformizing a
Zariski open subset of an arithmetic ball quotient.
\end{abstract}

\section{Introduction}
The local Torelli theorem for Calabi-Yau (abbreviated as CY)
manifolds says that the Kodaira-Spencer map for a versal local
deformation of a CY manifold is an isomorphism. This important fact
has a consequence on the Yukawa coupling length which is introduced
in the work \cite{VZ}. For a family $f: \sX\to S$ of CY manifolds of
dimension $n$, let
$$
(E=\bigoplus_{p+q=n}E^{p,q},\theta=\bigoplus_{p+q=n}\theta^{p,q})
$$
be the associated Higgs bundle, where
$E^{p,q}=R^qf_*\Omega^p_{\sX/S}$ and the Higgs field
$$
\theta^{p,q}: E^{p,q}\to E^{p-1,q+1}\otimes \Omega_S
$$
is given by the cup product with the Kodaira-Spencer map. The
\emph{length} of the Yukawa coupling $\varsigma(f)$ of $f$ is then
defined by
$$
\varsigma(f)=\min\{i\geq 1, \theta^i=0\}-1,
$$
where $\theta^i$ is the $i$-th iterated Higgs field
$$
\theta^i: E^{n,0}\stackrel{\theta^{n,0}}{\longrightarrow}
E^{n-1,1}\otimes
\Omega_S\stackrel{\theta^{n-1,1}}{\longrightarrow}\cdots
\stackrel{\theta^{n-i+1,i-1}}{\longrightarrow} E^{n-i,i}\otimes
S^{i}\Omega_S.
$$
The local Torelli theorem implies that, for a non-isotrivial family
$f$ of $n$-dimensional CY manifolds, it holds true that $1\leq
\varsigma(f)\leq n$. The connection of Yukawa coupling length with
Shararevich's conjecture for CY manifolds has been intensively
studied (see e.g. \cite{LTYZ2},\cite{Zhang}). It has been shown
that, for example, if the Yukawa coupling length of $f$ is maximal,
i.e, $\varsigma(f)=n$, then $f$ is rigid. The maximality of Yukawa
coupling length seems to be very often for moduli spaces of CY
manifolds as anti-canonical classes of a toric variety (it is the
case for moduli spaces of CY manifolds with a maximal degeneration
point). Our motivation is then to look for many examples of moduli
spaces of CY manifolds whose Yukawa coupling lengths are minimal,
i.e. one. As far as we know, higher dimensional examples are rare in
the literature. What we have obtained in this paper is an infinite
series of maximal families of $n$-dimensional CY manifolds with
Yukawa coupling length one for any odd $n\geq 3$. Here a family $f$
is said to be maximal if it is locally a versal deformation of each
CY closed fiber of $f$. Our main result is summarized as follows:
\begin{theorem}
Let $n\geq 3$ be an odd number and $\Modulin$ be the moduli space of
$n+3$ hyperplane arrangements of $\P^n$ in general position. Let
$f_n: \sX_n\to \Modulin$ be the family of $\frac{n+3}{2}$-fold
cyclic covers of $\P^n$ branched along the $n+3$ hyperplanes in
general position. Then the following statements are true:
\begin{itemize}
    \item [(i)] The family $f_n$ admits a simultaneous
    resolution $\tilde f_n: \tilde \sX_n\to \Modulin$ which is a
    maximal family of $n$-dimensional projective CY manifolds.
    \item [(ii)] $\varsigma(\tilde f_n)=1$. Consequently,
    Shafarevich's conjecture holds for $\tilde f_n$.
    \item [(iii)] The family $\tilde f_3$ admits a partial
    compactification to a Shimura family over an arithmetic quotient
    of $\B^3$.
    \item [(iv)] The families $\tilde f_5,\tilde f_9$ have a sub
    $\Q$-PVHS which uniformize a Zariski open subset of an
    arithmetic ball quotient.
\end{itemize}
\end{theorem}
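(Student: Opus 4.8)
The plan is to treat the four assertions in order, with everything resting on the explicit cyclic cover and its decomposition under the Galois group $\mu_d$, where $d=\frac{n+3}{2}$.

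For (i), writing the hyperplanes as zero loci of linear forms $\ell_1,\dots,\ell_{n+3}$, I would realize the fibers of $\sX_n$ as the degree-$d$ cyclic covers $w^{d}=\prod_{i=1}^{n+3}\ell_i$ in the total space of $\sO_{\P^n}(2)$ (note that $(n+3)/d=2$). The cyclic-cover formula gives $K_{X}=\pi^{*}\!\big(K_{\P^n}\otimes\sO_{\P^n}(2(d-1))\big)$; since $2(d-1)=n+1$ and $K_{\P^n}=\sO_{\P^n}(-n-1)$, this is $\sO_X$, so each fiber has trivial dualizing sheaf on its smooth locus. The singular locus sits over the intersection strata of the arrangement, with local model $w^{d}=x_1\cdots x_k$ along a codimension-$k$ stratum; these are Gorenstein toric singularities admitting crepant resolutions, and since the combinatorial type of the arrangement is constant over $\Modulin$ the resolution can be carried out simultaneously, producing a smooth projective family $\tilde f_n$ with $K_{\tilde X}=\sO_{\tilde X}$. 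Checking $h^{i,0}(\tilde X)=0$ for $0<i<n$ from the eigenspace computation below confirms these are CY manifolds. Maximality amounts to the Kodaira--Spencer map being an isomorphism; I would obtain it from the dimension equality $h^{n-1,1}(\tilde X)=\dim\Modulin=n(n+3)-\dim\mathrm{PGL}_{n+1}=n$ together with the injectivity of the period differential $\theta^{n,0}$, which the local Torelli theorem guarantees and which the explicit eigenspace description makes visible.

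For (ii), the $\mu_d$-action splits the Higgs bundle $(E,\theta)$ into eigen-sub-bundles $(E_\chi,\theta_\chi)$ indexed by characters $\chi$, and since $\theta$ is $\mu_d$-equivariant one has $\theta^{i}=\bigoplus_\chi\theta_\chi^{i}$, so $\varsigma(\tilde f_n)=\max_\chi\varsigma_\chi$. The heart of the matter is the computation of the eigenspace Hodge numbers $h^{p,q}_\chi$ of $H^n(\tilde X)$ by the residue/logarithmic-de-Rham method for cyclic covers along a hyperplane arrangement. The key claim is that for every $\chi$ the numbers $h^{p,q}_\chi$ are supported on at most two consecutive values of $p$; as $\theta_\chi$ lowers $p$ by one, this makes $\theta_\chi^{2}=0$ for all $\chi$, hence $\theta^{2}=0$ and $\varsigma(\tilde f_n)\le 1$, while non-isotriviality gives $\theta\neq 0$, so $\varsigma(\tilde f_n)=1$. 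Shafarevich's conjecture for $\tilde f_n$ then follows from the length-one case of the Arakelov-type inequalities for such variations, as in \cite{LTYZ2},\cite{Zhang}.

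For (iii) and (iv), I would identify the primitive eigen-sub-variation of $H^n(\tilde X)$ with the Deligne--Mostow variation attached to $n+3$ points on $\P^1$ with equal weights $\mu_i=1/d$, noting that $\sum_i\mu_i=(n+3)/d=2$. On a primitive character this eigenspace has rank $n+1$ and, by the concentration from (ii), Hodge type $(1,n)$ after a Tate twist; its period domain is the ball $\B^n$ and its monodromy lies in the corresponding $U(1,n)$. The half-integrality ($\Sigma$INT) conditions of \cite{DM} and \cite{Mostow0}-\cite{Mostow} hold for these weights, so the monodromy is an arithmetic lattice and the period map uniformizes a Zariski open subset of the arithmetic ball quotient. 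For $n=5,9$ (so $d=4,6$) the imprimitive characters contribute a complementary summand of the $\Q$-PVHS, so only this proper sub-$\Q$-PVHS is of ball type, which is (iv). For $n=3$ we have $d=3$ and $\Q(\zeta_3)$ imaginary quadratic, so there is a single conjugate pair of primitive characters and the whole $\Q$-PVHS is already of ball type; extending the period isomorphism across the stable boundary configurations then yields the partial compactification to a Shimura family over the arithmetic quotient of $\B^3$, which is (iii).

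I expect the main obstacle to be the eigenspace Hodge-number computation underlying (ii): establishing the two-consecutive-$p$ support that forces $\theta^{2}=0$, and in particular controlling the contribution of the exceptional divisors of the crepant resolution so that they do not disturb this concentration. Once the eigenspace Hodge structures are pinned down, assertions (iii) and (iv) become a matter of matching monodromy and period data to the Deligne--Mostow picture rather than a fresh computation.
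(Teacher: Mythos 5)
Your structural mechanism --- decompose $H^n$ under the covering group, observe that each eigenpiece is concentrated in two adjacent Hodge bidegrees, conclude $\theta_\chi^2=0$ --- is exactly the mechanism of the paper, but you reach it by a genuinely different route. You propose to compute the eigenspace Hodge numbers by a residue/logarithmic de Rham calculus on $\P^n$; the paper instead reduces everything to curves. Using $\Sym^n\P^1=\P^n$ it identifies $\Modulin$ with $\Modulione$ and the cover $X$ with $C^n/G$, where $C$ is the $r$-fold cyclic cover of $\P^1$ branched at the $m$ points and $G=N'\rtimes S_n$; the Abel--Jacobi map then gives an isomorphism of $\Q$-PHS $H^n(X,\Q)\simeq W_{1,\Q}$ with $W_{1,\Q}\otimes\Q(\zeta_r)=\bigoplus_{k=1}^{r-1}\bigwedge^nV_k$, the $V_k$ being the eigenspaces of $H^1(C)\otimes\Q(\zeta_r)$. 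Since $\dim V_k=n+1$ and $\dim V_k^{1,0}=2k-1$, the concentration of $\bigwedge^nV_k$ in bidegrees $(2k-1,n-2k+1)$ and $(2k-2,n-2k+2)$ is immediate, and the Higgs field becomes $\bigwedge^n\eta_k$; the residue computation you defer is thereby replaced by classical facts about cyclic covers of $\P^1$, and (ii)--(iv) reduce to bookkeeping on the Deligne--Mostow data for $m$ points with weights $k/r$.

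Beyond the route, there are genuine gaps. First, your argument for maximality is circular: local Torelli asserts that the Kodaira--Spencer map of the \emph{versal} deformation is an isomorphism, so it cannot be invoked to prove that $\theta^{n,0}$ of \emph{this} family is injective --- that injectivity is precisely the versality to be established. The paper proves it by identifying $\theta^{n,0}$ with $\bigwedge^n\eta_{r-1}$ and quoting the Deligne--Mostow/Looijenga result that $\eta_{r-1}:F^{1,0}_{r-1}\to F^{0,1}_{r-1}\otimes\Omega_{\Modulione}$ is an isomorphism; in your setup you would have to extract the same nondegeneracy from the Jacobian-ring multiplication, which is a genuine computation. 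Second, the obstacle you flag is real in two ways: Gorenstein toric singularities need not admit crepant resolutions in dimension $\geq 4$, so the existence of a global, projective, simultaneous, group-equivariant crepant resolution requires the specific structure here (the paper's binomial-hypersurface algorithm proceeds by codimension-two, order-one blow-ups, each crepant by adjunction); and one must show the resolution adds nothing to $H^n$ --- the paper proves $H^{p,q}(X)\to H^{p,q}(\tilde X)$ is an isomorphism for all $p\neq q$, which suffices precisely because $n$ is odd. Finally, for (iv) you still need the Galois-orbit argument that $\bigwedge^nV_1\oplus\bigwedge^nV_{r-1}$ underlies a $\Q$-sub-PVHS exactly when $(\Z/r)^{\times}=\{\pm1\}$, i.e.\ $r=3,4,6$, i.e.\ $n=3,5,9$, and the $\mathbf{\sum INT}$ condition must actually be checked against Mostow's list rather than asserted; it holds for equal weights only in these three cases.
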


{\bf Acknowledgement:} We would like to thank Guitang Lan for
helpful discussions, particularly in Lemma \ref{symmetric map
induces isomorphism on the moduli}. Our special thanks go to Igor
Dolgachev who has drawn our attention to the $n=3$ case of the
paper.

\section{The cyclic cover and its crepant resolution}\label{section:crepant resolution}
The meaning of letters in the tuple $(n,m,r)$ will be fixed
throughout the paper: $n$ is a natural odd number $\geq 3$, $m=n+3$
and $r=\frac{m}{2}$.
\begin{remark}
The technique of \emph{this} section can be applied equally to a
tuple $(n,m,r)$ where $n$ is a natural number, $r$ a positive factor
of $m$ and $m=n+1+\frac{m}{r}$, and yields the same result as the
special case.
\end{remark}
\subsection{The cyclic cover of $\P^{n}$}\label{subsection:branch cover}
An hyperplane arrangement $\mathfrak A=(H_1,\cdots,H_m)$ in $\P^n$
is said to be in general position if no $n+1$ hyperplanes in
$\mathfrak A$ do meet. One constructs the $r$-fold cyclic cover
$\pi:X\rightarrow \P^{n}$ branched along $H=H_{1}+\cdots+ H_{m}$ as
follows: For the line bundle $L=\mathcal{O}_{\P^n}(2)$ over $\P^n$,
one denotes ${\rm Tot}(L)$ for the total space of $L$. There is the
tautological section $s\in \Gamma({\rm Tot}(L),p^{*}L)$ of the
pull-back of $L$ via the natural projection $p: {\rm Tot}(L)\to
\P^n$. Suppose the hyperplane $H_{i},1\leq i\leq m$ is defined as
the zero locus of the section $s_{i}\in
\Gamma(\P^{n},\mathcal{O}(1))$, then we have the pull-back section
$p^{*}s_{i}\in \Gamma({\rm Tot}(L),p^{*}\mathcal{O}(1))$. The
$r$-fold cover $X$ is defined as the zero locus of the section
$$s^{r}-p^{*}s_{1}\otimes \cdots \otimes p^{*}s_{m}\in \Gamma({\rm Tot}(L),p^{*}\mathcal{O}(m)).$$
The natural projection $p$ induces the one $\pi: X\to \P^n$. It is
generically \'{e}tale and Galois, whose Galois group is cyclic of
order $r$, and singular over $H$. The variety $X$ is projective with
trivial $K_X$. It is a singular CY variety whose singular locus
under $\pi$ is exactly the singularity of $H$.

\subsection{The crepant resolution}
In this paragraph we aim to obtain a good smooth model of the cyclic
cover $X$. First recall the order function on a smooth variety. Let
$M$ be a smooth variety over an algebraically closed field and
$\mathcal{I}$ be an ideal sheaf on $M$. For any point $x\in M$, the
order of $\mathcal{I}$ at $x$ is defined as ${\rm
Ord}_{x}\mathcal{I}:=max\{r| \mathcal{I}_{x}\subset
m_{x}^{r}\sO_{M,x}\}$, where $m_{x}$ is the maximal ideal of the
local ring $\sO_{M,x}$. For a smooth and irreducible closed
subvariety $Z$ of $M$, the order of $\mathcal{I}$ along $Z$ is
defined as ${\rm Ord}_{Z}\mathcal{I}:= {\rm Ord}_{p}\mathcal{I}$,
where $p\in Z$ is the generic point of $Z$. We have the following
well-known formula for the canonical bundle under a blow-up.
\begin{lemma}\label{crepant-proposition}
Let $M$ be a smooth variety over an algebraically closed field and
$X\subset M$ be a (possibly singular and non-reduced) hypersurface.
Suppose $\mathcal{I}(X)$ is the defining ideal sheaf of $X$ in $M$.
Let $Z\subset X$ be a smooth and irreducible closed subvariety of
$M$, with codimension ${\rm codim}(Z,M)=n$. Suppose that the order
of $\mathcal{I}(X)$ along $Z$ is ${\rm Ord}_{Z}\mathcal{I}(X)=r$.
Consider the blow-up of $M$ along $Z$:
$\tilde{M}\xrightarrow{\tilde{\pi}}M$. Let
$\tilde{X}=Bl_{Z}X\xrightarrow{\pi}X$ be the strict transform of $X$
and $E=\tilde{\pi}^{-1}(Z)$ the exceptional divisor. Then we have
the following formula relating the canonical bundles of $\tilde{X}$
and $X$: $$K_{\tilde{X}}\simeq
\pi^{*}K_{X}+(n-r-1)\sO_{\tilde{M}}(E)|_{\tilde{X}}.$$ In
particular, if ${\rm codim}(Z,M)=2$ and ${\rm
Ord}_{Z}\mathcal{I}(X)=1$, then $K_{\tilde{X}}\simeq \pi^{*}K_{X}$.
\end{lemma}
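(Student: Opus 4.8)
The plan is to assemble the formula from three standard facts about blowing up a smooth variety along a smooth center and then to pass to the hypersurfaces by adjunction. First I would record the canonical bundle formula for the ambient blow-up: since $\tilde{M}\xrightarrow{\tilde{\pi}}M$ is the blow-up of the smooth $M$ along the smooth irreducible center $Z$ of codimension $n$, the space $\tilde{M}$ is again smooth, $E$ is an irreducible divisor, and
$$K_{\tilde{M}}\simeq \tilde{\pi}^{*}K_{M}+(n-1)\sO_{\tilde{M}}(E).$$
Second, I would decompose the total transform of the Cartier divisor $X$. Working in a local chart of $\tilde{M}$, write $Z=\{x_{1}=\cdots=x_{n}=0\}$, let $X=\{f=0\}$, and let $t$ be a local equation for $E$; the hypothesis ${\rm Ord}_{Z}\mathcal{I}(X)=r$ means exactly that $f=t^{r}g$ with $t\nmid g$, so that, as divisors on $\tilde{M}$,
$$\tilde{\pi}^{*}X\simeq \tilde{X}+r\,\sO_{\tilde{M}}(E),$$
where the strict transform $\tilde{X}$ is the effective Cartier divisor cut out by $g$.

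The conclusion then follows from adjunction applied on both levels. Because $M$ and $\tilde{M}$ are smooth, both $X$ and $\tilde{X}$ are effective Cartier divisors, hence Gorenstein, and adjunction for their dualizing sheaves holds even though $X$ may be singular or non-reduced:
$$K_{X}\simeq (K_{M}+X)|_{X},\qquad K_{\tilde{X}}\simeq (K_{\tilde{M}}+\tilde{X})|_{\tilde{X}}.$$
Substituting the two displayed identities into the second formula and using $\tilde{X}\simeq \tilde{\pi}^{*}X-r\,\sO_{\tilde{M}}(E)$ gives
$$K_{\tilde{X}}\simeq \big(\tilde{\pi}^{*}(K_{M}+X)+(n-1-r)\sO_{\tilde{M}}(E)\big)\big|_{\tilde{X}}.$$
Since $\tilde{\pi}$ restricts to $\pi$ on $\tilde{X}$, the term $\tilde{\pi}^{*}(K_{M}+X)|_{\tilde{X}}$ equals $\pi^{*}\big((K_{M}+X)|_{X}\big)=\pi^{*}K_{X}$ by the first adjunction identity, yielding $K_{\tilde{X}}\simeq \pi^{*}K_{X}+(n-r-1)\sO_{\tilde{M}}(E)|_{\tilde{X}}$. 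The special case is immediate, since $n=2,\ r=1$ makes the coefficient $n-r-1$ vanish.

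The step I expect to require the most care is the second one: justifying that the exceptional multiplicity in the total transform is exactly the order $r$ defined at the generic point of $Z$, and, relatedly, that adjunction is legitimate for the possibly non-reduced $X$. The multiplicity point is clean because $Z$ is smooth and irreducible, so $E$ is irreducible and its multiplicity in $\tilde{\pi}^{*}X$ is computed at the single generic point of $E$, which lies over the generic point of $Z$; this is precisely ${\rm Ord}_{Z}\mathcal{I}(X)=r$. The adjunction point is handled by the observation that a hypersurface in a smooth variety is a Cartier divisor and therefore Gorenstein, so its dualizing sheaf satisfies the adjunction formula regardless of reducedness, which is what lets the argument go through for the non-reduced $X$ arising from the cyclic cover.
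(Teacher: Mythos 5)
Your proposal is correct and is exactly the argument the paper has in mind: its entire proof reads ``a direct application of the adjunction formula,'' and your three steps (the blow-up canonical bundle formula, the total-transform decomposition $\tilde{\pi}^{*}X=\tilde{X}+rE$, and adjunction on both $X$ and $\tilde{X}$) are the standard way to carry that out. Your added care about the exceptional multiplicity being computed at the generic point of $E$ over the generic point of $Z$, and about adjunction holding for the Gorenstein (possibly non-reduced) hypersurface $X$, fills in precisely the details the paper leaves implicit.
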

\begin{proof}
The proof is a direct application of the adjunction formula.
\end{proof}
In order to fix notations, we recall the following definition.
\begin{definition}\label{transversal-defn}
If $M$ is a smooth variety, and $E_{1},\cdots, E_{k}$ are divisors
of $M$, we say $E_{1},\cdots, E_{k}$ meet transversally at a closed
point $x\in M$ if one can choose a regular system of parameters
$z_{1},\cdots, z_{l}\in \mathcal{O}_{M,x}$ at $x$ such that for each
$1\leq i\leq k$
\begin{itemize}
  \item [(1)] either $x\not\in E_{i}$, or
  \item [(2)] $E_{i}=(z_{c(i)}=0)$ in a neighborhood of $x$ for some $c(i)$, and
  \item [(3)] $c(i)\neq c(i^{'})$ if $i\neq i^{'}$.
\end{itemize}
and the regular system of parameters $z_{1},\cdots, z_{l}\in
\mathcal{O}_{M,x}$ is called a coordinate system at $x$ admissible
to $E_{1},\cdots, E_{k}$. $E_{1},\cdots, E_{k}$ are said to meet
transversally if they meet transversally at each closed point of
$M$. In this case, $D=\sum_{i=1}^{k}E_{i}$ is called a simple normal
crossing divisor on $M$ and a subvariety $Z\subset M$ is said to
meet transversally with $E_{1},\cdots, E_{k}$ if at each closed
point $x\in Z$, one can choose  $z_{1},\cdots, z_{l}$ as above such
that in addition
 \begin{itemize}
  \item [(4)] $Z=(z_{j_{1}}=\cdots=z_{j_{s}}=0)$ for some $j_{1},\cdots, j_{s}$, again in some open neighborhood of $x$.
  \end{itemize}
In particular, $Z$ is smooth, and some of the $E_{i}$ are allowed to
contain $Z$.
\end{definition}
Let $M$ be a smooth variety, and $\mathfrak{E}=\{E_{1},\cdots,
E_{s}\}$, $\mathfrak{F}=\{F_{1},\cdots, F_{t}\}$ be sets of smooth
divisors of $M$ such that the $s+t$ divisors $E_{1},\cdots, E_{s},
F_{1},\cdots, F_{t}$ meet transversally on $M$. A reduced and
irreducible hypersurface $X\subset M$ is called pre-binomial with
respect to $(\mathfrak{E}, \mathfrak{F})$ if for any closed point
$x\in X$,
\begin{itemize}
\item[(1)] either $X, E_{1},\cdots, E_{s}, F_{1},\cdots, F_{t}$ meet transversally at $x$, or
\item[(2)] there exists a coordinate system $(y_{1},\cdots, y_{m},x_{1},\cdots, x_{n})$ at $x$ admissible to
$E_{1},\cdots, E_{s}, F_{1},\cdots, F_{t}$ (refer to Definition
\ref{transversal-defn} for the notation) such that
 \begin{itemize}
 \item the defining equation of $X$ is
  $$y_{1}^{a_{1}}\cdots y_{p}^{a_{p}}-x_{1}\cdots x_{q}=0$$ in a nonempty open neighborhood of $x$, where the integers satisfy $$1\leq p\leq m, 1\leq q\leq n, a_{1}\geq 1,\cdots, a_{p}\geq 1,$$ and
  \item for each $1\leq i\leq p$,  $y_{i}=0$ is a defining equation for some $E_{c_{i}}\in \mathfrak{E}$ in a nonempty open neighborhood of $x$, and
  \item for each $1\leq j\leq q$, $x_{j}=0$ is a defining equation for some $F_{d_{j}}\in \mathfrak{F}$ in a nonempty open neighborhood of $x$.
 \end{itemize}
\end{itemize}
If $X$ is a pre-binomial hypersurface of $M$ with respect to
$(\mathfrak{E}, \mathfrak{F})$, keeping the notations in the above
definition, then for any closed point $x\in X$, for any $E\in
\mathfrak{E}$, $F\in \mathfrak{F}$, define
\begin{equation}\notag
\begin{split}
e(E,x)&=\left\{
         \begin{array}{ll}
           a_{i}, & \hbox{if $E=E_{c_{i}}$ for some $i$ in the above definition ;} \\
           0, & \hbox{otherwise.}
         \end{array}
       \right.\\
e(F,x)&=\left\{
         \begin{array}{ll}
           1, & \hbox{if $F=F_{d_{j}}$ for some $j$ in the above definition ;} \\
           0, & \hbox{otherwise.}
         \end{array}
       \right.
\end{split}
\end{equation}
It is not difficult to verify that the above definitions of $e(E,x),
e(F,x)$ do not depend on the choice of local
coordinates, so they are well-defined nonnegative integers.\\
Let $M$, $\mathfrak{E}$, $\mathfrak{F}$ be as above, then a
hypersurface $X$ of $M$ is called binomial with respect to
$(\mathfrak{E},\mathfrak{F})$ if it is pre-binomial with respect to
$(\mathfrak{E},\mathfrak{F})$ and satisfies the following two
additional conditions:
\begin{itemize}
\item[(3)] $\forall E\in \mathfrak{E}\cup \mathfrak{F}$, $\forall  \textmd{ closed points }x_{1}, x_{2}\in E\cap X$, $e(E,x_{1})=e(E,x_{2})$.
\item[(4)] $\forall E\in \mathfrak{E}$, $\forall F\in \mathfrak{F}$, if there exists a closed point  $x\in X$ such that $e(E,x)>0$ and $e(F,x)>0$, then $E\cap F\subset X$.
\end{itemize}
Our key observation is the following stable proposition for binomial
hypersurfaces.
\begin{proposition}\label{stable prop of binomial hypersurfaces}
Let $M$, $\mathfrak{E}=\{E_{1},\cdots, E_{s}\}$,
$\mathfrak{F}=\{F_{1},\cdots, F_{t}\}$ be as above. Suppose $X$ is a
hypersurface of $M$ binomial with respect to $(\mathfrak{E},
\mathfrak{F})$ and there exists a closed point $x\in X$ such that
$e(E_{1},x)>0$ and $e(F_{1},x)>0$. Let
$M_{1}=Bl_{Z}M\xrightarrow{\pi}M$ be the blow-up of $M$ along
$Z=E_{1}\cap F_{1}\subset X$ and $X_{1}, E_{1}^{'},\cdots,E_{s}^{'},
F_{1}^{'},\cdots, F_{t}^{'}$ be the strict transforms of $X,
E_{1},\cdots,E_{s}, F_{1},\cdots, F_{t}$ respectively. Let
$\mathfrak{E}_{1}=\{ E_{1}^{'},\cdots, E_{s}^{'},\newline
E_{s+1}^{'}\}$, $\mathfrak{F}_{1}=\{F_{1}^{'},\cdots, F_{t}^{'}\}$,
where $E_{s+1}^{'}=\pi^{-1}(Z)$ is the exceptional divisor. Then
\begin{itemize}
\item[(1)]$\mathfrak{E}_{1}\cup \mathfrak{F}_{1}$ is a set of smooth divisors meeting transversally on $M_{1}$, and $X_{1}$ is a hypersurface of $M_{1}$ binomial with respect to $(\mathfrak{E}_{1}, \mathfrak{F}_{1})$.
\item[(2)]Each irreducible component of the singular locus of $X$, say $Sing(X)$, has the form $E_{i_{1}}\cap F_{j_{1}}\cap F_{j_{2}}\subset X$ or $E_{i_{1}}\cap E_{i_{2}}\cap F_{j_{1}}\cap F_{j_{2}}\subset X$, for $1\leq i_{1}\neq i_{2}\leq s$, $1\leq j_{1}\neq j_{2}\leq t$.
\item[(3)]The induced morphism $X_{1}\xrightarrow{\pi_{1}}X$ satisfies  that $\pi_{1}^{-1}(E_{1}\cap F_{1}\cap Sing(X))\rightarrow E_{1}\cap F_{1}\cap Sing(X)$ is a $\P^{1}-$bundle, and $\pi_{1}^{-1}(X-E_{1}\cap F_{1}\cap Sing(X))\rightarrow X-E_{1}\cap F_{1}\cap Sing(X)$ is an isomorphism.
\item[(4)]Let $\mathfrak{T}=\{E_{i_{1}}^{'}, \cdots, E_{i_{k}}^{'}, F_{j_{1}}^{'},\cdots, F_{j_{l}}^{'}\}\subset \mathfrak{E}_{1} \cup \mathfrak{F}_{1}$ be a subset of $\mathfrak{E}_{1} \cup \mathfrak{F}_{1}$ satisfying $V=E_{i_{1}}^{'}\cap \cdots\cap E_{i_{k}}^{'}\cap F_{j_{1}}^{'}\cap\cdots\cap F_{j_{l}}^{'}\subset X_{1}$, then $V=\emptyset $ if   $\{E_{1}^{'}, F_{1}^{'}\}\subset \mathfrak{T}$. Suppose $V\neq \emptyset$, then
    \begin{equation}\notag
    \pi(V)=\left\{
             \begin{array}{ll}
               E_{i_{1}}\cap\cdots\cap E_{i_{k}}\cap F_{j_{1}}\cap \cdots \cap F_{j_{l}}, & \hbox{  if $E_{s+1}^{'}\notin \mathfrak{T}$
;} \\
               E_{1}\cap F_{1}\cap E_{i_{2}}\cap \cdots\cap E_{i_{k}}\cap F_{j_{1}}\cap \cdots \cap F_{j_{l}}, & \hbox{ if  $E_{i_{1}}^{'}=E_{s+1}^{'}$.}
             \end{array}
           \right.
   \end{equation}
\item[(5)] Notations as in $(4)$. If $\{E_{1}^{'}, F_{1}^{'}\}\cap \mathfrak{T}\neq \emptyset$, then the induced morphism $V\xrightarrow{\tilde{\pi}} \pi(V)$ is an isomorphism. If $\{E_{1}^{'}, F_{1}^{'}\}\cap \mathfrak{T}= \emptyset$, then the induced morphism $V\xrightarrow{\tilde{\pi}} \pi(V)$ satisfies that $\tilde{\pi}^{-1}(E_{1}\cap F_{1}\cap \pi(V))\rightarrow E_{1}\cap F_{1}\cap \pi(V)$ is a $\P^{1}-$bundle, and $\tilde{\pi}^{-1}(\pi(V)-E_{1}\cap F_{1})\rightarrow \pi(V)-E_{1}\cap F_{1}$ is an isomorphism.
\end{itemize}
\end{proposition}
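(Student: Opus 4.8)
The plan is to reduce every assertion to an explicit computation in the two affine charts of the blow-up, using the local binomial normal form, and then argue pointwise over the centre. Since $\pi$ is an isomorphism over $M\setminus Z$ and $Z\subset X$, only the behaviour over $Z=E_{1}\cap F_{1}$ is at issue. First I would observe that at every closed point $w\in Z$ the hypersurface $X$ is necessarily in case $(2)$ of the pre-binomial definition: if $X,E_{1},F_{1}$ met transversally at $w$ then $E_{1}\cap F_{1}\cap X$ would have codimension $3$, contradicting that $Z=E_{1}\cap F_{1}\subset X$ has codimension $2$. Hence near $w$ one may choose admissible coordinates with
$$X=(y_{1}^{a_{1}}\cdots y_{p}^{a_{p}}-x_{1}\cdots x_{q}=0),\qquad E_{1}=(y_{1}=0),\quad F_{1}=(x_{1}=0),$$
where $a_{1}=e(E_{1},w)\geq 1$ is independent of $w$ by condition $(3)$.

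For part $(1)$ I would blow up $Z=(y_{1}=x_{1}=0)$ and compute in the two charts $x_{1}=y_{1}t$ and $y_{1}=x_{1}s$. Factoring out the exceptional equation yields the strict transforms
$$X_{1}\colon\ y_{1}^{a_{1}-1}y_{2}^{a_{2}}\cdots y_{p}^{a_{p}}-t\,x_{2}\cdots x_{q}=0,\qquad X_{1}\colon\ s^{a_{1}}x_{1}^{a_{1}-1}y_{2}^{a_{2}}\cdots y_{p}^{a_{p}}-x_{2}\cdots x_{q}=0$$
in the first and second chart, with $F_{1}'=(t=0)$ in the first and $E_{1}'=(s=0)$ in the second. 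The key point, which makes the class of binomial hypersurfaces stable, is that in both charts the exceptional divisor $E_{s+1}'$ (namely $(y_{1}=0)$, resp.\ $(x_{1}=0)$) enters the $\mathfrak{E}$-monomial with exponent $a_{1}-1$; since $a_{1}$ is globally constant on $Z$ by $(3)$, the new exponent $e(E_{s+1}',\cdot)=a_{1}-1$ is well defined. Transversality of $\mathfrak{E}_{1}\cup\mathfrak{F}_{1}$ is the standard fact that blowing up a transversal intersection of two simple normal crossing components preserves the simple normal crossing property, and the remaining conditions $(3)$–$(4)$ I would read off the two normal forms: the old exponents are inherited, and for each new pair $(E_{s+1}',F_{j}')$ with both exponents positive one checks directly that $E_{s+1}'\cap F_{j}'\subset X_{1}$ because its image lies in $E_{1}\cap F_{1}\subset X$.

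Part $(2)$ is a Jacobian computation on the normal form $f=y_{1}^{a_{1}}\cdots y_{p}^{a_{p}}-x_{1}\cdots x_{q}$: the $x$-derivatives force at least two of the $x_{j}$ to vanish, while the $y$-derivatives force either two of the $y_{i}$ to vanish or a single $y_{i}$ with $a_{i}\geq 2$ to vanish. Matching these loci with the ambient divisors and using condition $(4)$ to guarantee the relevant intersections lie in $X$, the irreducible components come out as $E_{i_{1}}\cap F_{j_{1}}\cap F_{j_{2}}$ (the case $a_{i_{1}}\geq 2$) or $E_{i_{1}}\cap E_{i_{2}}\cap F_{j_{1}}\cap F_{j_{2}}$. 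For part $(3)$ I would intersect $X_{1}$ with the exceptional $\P^{1}$ over a point $w\in Z$: substituting $y_{1}=0$ (resp.\ $x_{1}=0$) into the normal forms above, the fibre equation in the exceptional parameter $t$ (resp.\ $s$) is linear and has a unique root exactly when $w$ is a smooth point of $X$, and degenerates to the whole $\P^{1}$ exactly when $w\in\mathrm{Sing}(X)$; this gives the isomorphism over $X-E_{1}\cap F_{1}\cap\mathrm{Sing}(X)$ and the $\P^{1}$-bundle over $E_{1}\cap F_{1}\cap\mathrm{Sing}(X)$.

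Finally, parts $(4)$ and $(5)$ apply the same chart analysis to a stratum $V$. The emptiness statement is the conceptual input that blowing up $E_{1}\cap F_{1}$ separates $E_{1}$ and $F_{1}$: only $F_{1}'$ survives in the first chart and only $E_{1}'$ in the second, so $E_{1}'\cap F_{1}'=\emptyset$. To compute $\pi(V)$ I would track each factor: a strict transform $E_{i}'$ or $F_{j}'$ maps onto $E_{i}$ resp.\ $F_{j}$, while the exceptional factor $E_{s+1}'$ maps onto $Z=E_{1}\cap F_{1}$, which accounts for the two displayed formulae. For the dichotomy in $(5)$ I would use that if $E_{1}'\in\mathfrak{T}$ (or $F_{1}'\in\mathfrak{T}$) then $V$ lies on the strict transform of a divisor containing the Cartier divisor $Z$, along which $\pi$ restricts to an isomorphism; otherwise the computation of part $(3)$, run on $V$ instead of $X_{1}$, produces the stated $\P^{1}$-bundle over $E_{1}\cap F_{1}\cap\pi(V)$ and isomorphism elsewhere. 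The main obstacle is part $(1)$: keeping the two charts consistent and verifying that the exceptional divisor acquires a globally constant exponent $a_{1}-1$ is precisely where the defining conditions $(3)$ and $(4)$ of a binomial hypersurface are used, and it is what guarantees that the construction can be iterated.
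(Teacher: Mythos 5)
Your proposal is correct and follows exactly the route the paper intends: the paper's own proof consists of the single sentence ``The verification is straightforward in local coordinates,'' and your two-chart computation with the binomial normal form $y_1^{a_1}\cdots y_p^{a_p}-x_1\cdots x_q=0$, the observation that every point of $Z=E_1\cap F_1$ is forced into case (2) of the definition, and the constancy of $a_1$ via condition (3) is precisely that verification carried out. The chart formulas for $X_1$, the exponent $a_1-1$ on the exceptional divisor, the Jacobian analysis for (2), the fibrewise dichotomy for (3), and the Cartier-divisor argument for (5) are all correct.
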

\begin{proof}
The verification is straightforward in local coordinates.
\end{proof}
Let $M$, $\mathfrak{E}=\{E_{1},\cdots, E_{s}\}$,
$\mathfrak{F}=\{F_{1},\cdots, F_{t}\}$ be as above. Suppose $X$ is a
hypersurface of $M$ binomial with respect to $(\mathfrak{E},
\mathfrak{F})$. In order to get a resolution of $X$, we define a
function on $X$ to measure the singularities on it. For any closed
point $x\in X$, define $g_{1}(x)=\sum_{F\in \mathfrak{F}}e(F,x)$,
$g_{2}(x)=\sum_{E\in \mathfrak{E}}e(E,x)$. Then we get a function on
$X$:
\begin{equation}\notag
g: X\rightarrow \mathbb{N}\times\mathbb{N},\quad  x\mapsto
(g_{1}(x),g_{2}(x)).
\end{equation}
It is easy to see that if $g(x)\neq (0,0)$, then $g^{-1}(g(x))$ is a
disjoint union of smooth  subvarieties of $X$. In this case, let
$n(x)$ be the number of irreducible components of $g^{-1}(g(x))$.
Then we define the following function:
\begin{equation}\notag
\begin{split}
f: X&\rightarrow \mathbb{N}\times\mathbb{N}\times\mathbb{N}\\
f(x)&=\left\{
       \begin{array}{ll}
         (g_{1}(x),g_{2}(x),n(x)), & \hbox{if $g(x)\neq (0,0)$;} \\
         0, & \hbox{if $g(x)=(0,0)$.}
       \end{array}
     \right.
\end{split}
\end{equation}
Given the lexicographic order,
$\mathbb{N}\times\mathbb{N}\times\mathbb{N}$ is a well-ordered set,
i.e. for $$(a_{1},b_{1},c_{1}),(a_{2},b_{2},c_{2})
\in\mathbb{N}\times\mathbb{N}\times\mathbb{N},$$
$(a_{1},b_{1},c_{1})<(a_{2},b_{2},c_{2})$ if and only if
$a_{1}<a_{2}$, or $a_{1}=a_{2}$ and $b_{1}<b_{2}$, or $a_{1}=a_{2}$
and $b_{1}=b_{2}$ and $c_{1}<c_{2}$. Then it is easy to see that $f$
is an upper semi-continuous function on $X$. Therefore we get the
following algorithm $(*)$ to resolve the singularity of $X$:
\begin{itemize}
\item[(0)] If the maximal value $max_{x\in X}f(x)=(0,0,0)$, then $X$ is already a smooth variety meeting transversally with the divisors in $\mathfrak{E}\cup \mathfrak{F}$.
\item[(1)] If $max_{x\in X}f(x)>(0,0,0)$, take any closed point $x\in X$ such that $f(x)$ attains the maximal value of $f$. It is not difficult to see that we can choose $E_{i}\in \mathfrak{E}$, $F_{j}\in \mathfrak{F}$ such that $e(E_{i},x)>0$ and  $e(F_{j},x)>0$. Then blow up $M$ along $Z= E_{i}\cap F_{j}$ (we have $Z= E_{i}\cap F_{j}\subset X$ by the definition of binomial hypersurfaces). Let $E$ be the exceptional divisor. Let $M_{1}=Bl_{Z}M$, and $X_{1},E_{1}^{'},\cdots,E_{s}^{'}, F_{1}^{'},\cdots, F_{t}^{'}$ be the strict transforms of $X, E_{1},\cdots,E_{s}, F_{1},\cdots, F_{t}$ respectively. Let $\mathfrak{E}_{1}=\{E, E_{1}^{'},\cdots, E_{s}^{'}\}$, $\mathfrak{F}_{1}=\{F_{1}^{'},\cdots, F_{t}^{'}\}$, then according to Proposition \ref{stable prop of binomial hypersurfaces}, $X_{1}$ is a hypersurface of $M_{1}$ binomial with respect to $(\mathfrak{E}_{1},\mathfrak{F}_{1})$. So we can define a function $f_{1}:X_{1}\rightarrow \mathbb{N}\times\mathbb{N}\times\mathbb{N}$ in the same way as above. Let $\pi_{1}: X_{1}\rightarrow X$ be the blow-up morphism. Then it is direct to verify:
\begin{itemize}
\item for any  point $x\in X_{1}$, $f_{1}(x)\leq f(\pi_{1}(x))$, and
\item the maximal value drops strictly, i.e. $max_{x\in X_{1}}f_{1}(x)< max_{x\in X}f(x)$.
\end{itemize}
Note also that since $Z$ has codimension $2$ everywhere in $M$, and
$X$ has order $1$ at the  generic point of each irreducible
component of  $Z$, we have $K_{X_{1}}\simeq \pi_{1}^{*}K_{X}$,
according to Lemma \ref{crepant-proposition}.
\item[(2)] If $max_{x\in X_{1}}f_{1}(x)>(0,0,0)$, then continue to blow up $M_{1}$ and get $$M_{2}, X_{2}, \mathfrak{E}_{2},\mathfrak{F}_{2},f_{2}.$$
$\cdots$
\end{itemize}
We summarize the above discussions in the following theorem:
\begin{theorem}\label{resolution theorem}
Let $M$, $\mathfrak{E}=\{E_{1},\cdots, E_{s}\}$,
$\mathfrak{F}=\{F_{1},\cdots, F_{t}\}$ be as above. Suppose $X$ is a
hypersurface of $M$ binomial with respect to $(\mathfrak{E},
\mathfrak{F})$. Then the above algorithm $(*) $ terminates after
finite steps. Suppose it terminates after $N$ steps, then we get
$M_{N}, X_{N}, \mathfrak{E}_{N},\mathfrak{F}_{N}$ such that $X_{N}$
is a smooth hypersurface of $M_{N}$ meeting transversally with the
set of divisors  $\mathfrak{E}_{N}\cup \mathfrak{F}_{N}$. Moreover,
let $\pi=\pi_{N}\circ\cdots\circ\pi_{1}:X_{N}\rightarrow X$ be the
composition of blow-up morphisms, then
\begin{itemize}
\item $\pi$ is crepant, i.e. $K_{X_{N}}\simeq \pi^{*}K_{X}$;
\item  $\pi $ is a strong resolution of $X$, i.e. if $U=X-Sing(X)$ is the regular part of $X$, then $\pi$ induces an isomorphism $\pi^{-1}(U)\xrightarrow{\sim} U$;
\item $\pi$ is a projective morphism, moreover, it is a composition of blow-ups along smooth centers.
\end{itemize}
\end{theorem}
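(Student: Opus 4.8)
The plan is to handle the five assertions separately, reducing each to the single-step local analysis recorded in Proposition \ref{stable prop of binomial hypersurfaces} together with the canonical-bundle formula of Lemma \ref{crepant-proposition}; once those are in hand the theorem is essentially an exercise in bookkeeping along the steps of $(*)$.

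I would begin with termination. The function $f$ takes values in $\mathbb{N}\times\mathbb{N}\times\mathbb{N}$ with the lexicographic order, which is well-ordered and therefore admits no infinite strictly descending chain. The two facts established when $(*)$ is defined --- that $f_{1}(x)\leq f(\pi_{1}(x))$ for all $x\in X_{1}$ and that $\max_{X_{1}}f_{1}<\max_{X}f$ --- show that the successive maxima $\max_{X}f>\max_{X_{1}}f_{1}>\cdots$ form a strictly descending chain, which must therefore reach $(0,0,0)$ after finitely many, say $N$, steps. I expect the strict drop of the maximum to be the only delicate input, and I would justify it in the two standard charts of the blow-up of $Z=E_{i}\cap F_{j}$: in the chart where $F_{j}$ is absorbed into the exceptional divisor the factor count $g_{1}$ drops, in the chart where $E_{i}$ is absorbed the exponent sum $g_{2}$ drops, while any top stratum component of $g^{-1}(g_{\max})$ not contained in $Z$ survives unchanged, so that the number $n$ of such components strictly decreases. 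This last point is exactly why $f$ carries the third coordinate $n(x)$.

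Granting termination, the remaining assertions follow by reading off step $(0)$ and composing the single-step statements. When $\max_{X}f=(0,0,0)$ one has $g_{1}\equiv g_{2}\equiv 0$, i.e. $e(E,\cdot)=e(F,\cdot)=0$ for every divisor in $\mathfrak{E}_{N}\cup\mathfrak{F}_{N}$; since alternative (2) in the definition of a pre-binomial hypersurface always forces some $a_{i}=e(E_{c_{i}},x)\geq 1$, every point of $X_{N}$ must fall under alternative (1), so $X_{N}$ and the divisors of $\mathfrak{E}_{N}\cup\mathfrak{F}_{N}$ meet transversally and in particular $X_{N}$ is smooth. For crepancy, each blow-up center $Z=E_{i}\cap F_{j}$ is a transversal intersection of two smooth divisors, hence smooth of codimension $2$ in $M_{i-1}$, and along it the local equation $y_{1}^{a_{1}}\cdots y_{p}^{a_{p}}-x_{1}\cdots x_{q}$ has order $1$; Lemma \ref{crepant-proposition} then gives $K_{X_{i}}\simeq\pi_{i}^{*}K_{X_{i-1}}$ at each step, and composing over $i=1,\dots,N$ yields $K_{X_{N}}\simeq\pi^{*}K_{X}$.

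For the strong resolution I would argue inductively: set $U_{0}=X\setminus Sing(X)$ and $U_{i}=\pi_{i}^{-1}(U_{i-1})$. Part (3) of Proposition \ref{stable prop of binomial hypersurfaces} places the locus where $\pi_{i}$ fails to be an isomorphism inside $Sing(X_{i-1})$, so $\pi_{i}$ restricts to an isomorphism $U_{i}\xrightarrow{\sim}U_{i-1}$ and $U_{i}$ lies again in the smooth locus of $X_{i}$; iterating gives $\pi^{-1}(U)=U_{N}\xrightarrow{\sim}U$. Finally, projectivity is immediate: each $\pi_{i}$ is the restriction to strict transforms of a blow-up of the smooth ambient variety along the smooth center $E_{i}\cap F_{j}$, hence projective, and a finite composition of such maps is again a projective composition of blow-ups along smooth centers. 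The one step where real work is needed is the strict decrease of $\max f$ underlying termination; all the other conclusions are formal consequences of Lemma \ref{crepant-proposition} and Proposition \ref{stable prop of binomial hypersurfaces}.
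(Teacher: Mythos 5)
Your proposal is correct and follows essentially the same route as the paper: termination via the strict lexicographic descent of $\max f$ (with the third coordinate $n(x)$ accounting for components of $g^{-1}(g_{\max})$ away from the center), crepancy from the codimension-$2$, order-$1$ case of Lemma \ref{crepant-proposition} applied at each step, and the strong-resolution and projectivity claims read off from Proposition \ref{stable prop of binomial hypersurfaces}. The paper's own proof is just as terse ("most of the theorem follows from the above discussions"), differing only in that it justifies the strong-resolution property by noting that each center lies entirely inside $X_i$, so over the regular locus one merely blows up a Cartier divisor, whereas you invoke part (3) of the Proposition directly; the two justifications are equivalent.
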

\begin{proof}
Most of the theorem follows from the above discussions. We just
explain why $\pi$ is a strong resolution. Note that in each blow-up
step the smooth center $Z_{i}$ is contained completely in $X_{i}$.
So in the regular part of $X_{i}$, we just blow up a Cartier
divisor. Therefore the regular part of $X_{i}$ remains unchanged.
\end{proof}
Now we give an application of Theorem \ref{resolution theorem}.
Suppose $Q$ is a smooth variety, and $D=\sum_{j=1}^{t}F_{j}$ is a
simple normal crossing divisor on $Q$ defined by the section
$s_{D}\in \Gamma(Q,\mathcal{O}_{Q}(D))$. Let $a$ be a positive
integer and $L\in {\rm Pic}(Q)$ such that $aL= D$. Then in the total
space of $L$: $M={\rm Tot}(L)\xrightarrow{p} Q$, we have the
tautological section $s_{0}\in \Gamma(M, p^{*}L)$.  The hypersurface
$X$ of $M$ defined by the equation $s_{0}^{a}=p^{*}s_{D}$ is called
the $a$-fold cyclic cover of $Q$ branched along $D$. It is easy to
verify that $X$ is a hypersurface of $M$ binomial with respect to
$(\mathfrak{E},
\mathfrak{F})=(\{E_{0}=(s_{0}=0)\},\{p^{*}F_{1},\cdots,
p^{*}F_{t}\})$. So we can apply Theorem \ref{resolution theorem} to
get a crepant resolution of $X$. That is, we have the following:
\begin{corollary}\label{branched covering resolution}
Suppose $Q$ is a smooth variety and $D=\sum_{j=1}^{t}F_{j}$ is a
simple normal crossing divisor on $Q$, for any $a\geq 1$, if there
exists $L\in Pic(Q)$ such that $aL= D$, then the $a$-fold cyclic
cover of $Q$ branched along $D$ admits a crepant resolution, which
can be obtained by applying the crepant resolution algorithm $(*)$.
\end{corollary}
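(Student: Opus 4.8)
The plan is to realize the cyclic cover as a binomial hypersurface and then quote Theorem \ref{resolution theorem} verbatim. Keeping the notation set up just before the corollary, put $M={\rm Tot}(L)\xrightarrow{p}Q$ and let $X\subset M$ be the hypersurface $s_{0}^{a}=p^{*}s_{D}$, equipped with the divisor data $\mathfrak{E}=\{E_{0}\}$, $E_{0}=(s_{0}=0)$, and $\mathfrak{F}=\{p^{*}F_{1},\dots,p^{*}F_{t}\}$. Once I verify that these data satisfy the hypotheses of Theorem \ref{resolution theorem} — namely that the $t+1$ divisors meet transversally on $M$ and that $X$ is binomial with respect to $(\mathfrak{E},\mathfrak{F})$ — the theorem immediately produces a projective, crepant, strong resolution $\pi\colon X_{N}\to X$ obtained by running the algorithm $(*)$, which is exactly the assertion of the corollary. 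Thus the whole content is the local verification of the binomial conditions.

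For that verification I would argue in suitable \'{e}tale-local coordinates on $Q$. Fix $\bar{x}\in Q$ and choose a regular system of parameters $z_{1},\dots,z_{d}$ adapted to the simple normal crossing divisor $D$, so that the branches of $D$ through $\bar{x}$, say $F_{j_{1}},\dots,F_{j_{q}}$, are $(z_{1}=0),\dots,(z_{q}=0)$. Trivializing $L$ near $\bar{x}$ yields a fibre coordinate $w$ with $s_{0}=w$, and after absorbing a unit into the trivialization we may take $p^{*}s_{D}=z_{1}\cdots z_{q}$. Then $w,z_{1},\dots,z_{d}$ is a coordinate system on $M$ in which $E_{0}=(w=0)$ and $p^{*}F_{j_{l}}=(z_{l}=0)$ sit among distinct coordinate hyperplanes, so the divisors meet transversally, and the local equation of $X$ becomes
$$
w^{a}-z_{1}\cdots z_{q}=0,
$$
which is precisely the pre-binomial normal form with $p=1$, $a_{1}=a$, the unique $y$-variable $w$ defining $E_{0}\in\mathfrak{E}$, and $x_{1},\dots,x_{q}$ defining the $p^{*}F_{j_{l}}\in\mathfrak{F}$. (At points with $w\neq 0$, equivalently away from the ramification over $D$, the equation $w^{a}=z_{1}\cdots z_{q}$ has a nonvanishing $w$-derivative, no branch passes through, and $X$ is smooth and transversal to the divisors, so condition (1) of the pre-binomial definition holds there; the local equation also shows $X$ is reduced and irreducible, as required.) This exhibits $X$ as pre-binomial with $e(E_{0},\cdot)=a$ and $e(p^{*}F_{j},\cdot)=1$ on the relevant loci.

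It then remains to check the two global conditions and to read off crepancy. Condition (3) is immediate from the two exponents above: the branch multiplicity $a$ of $E_{0}$ and the multiplicity $1$ of each $p^{*}F_{j}$ — the latter forced by $D$ being \emph{reduced} — are constant along $E_{0}\cap X$ and $p^{*}F_{j}\cap X$ respectively. For condition (4) I must show $E_{0}\cap p^{*}F_{j}\subset X$ whenever both multiplicities are positive: on $E_{0}$ one has $s_{0}=0$, while on $p^{*}F_{j}$ one has $p^{*}s_{D}=0$ since $s_{D}$ vanishes on all of $D\supset F_{j}$, so $s_{0}^{a}-p^{*}s_{D}$ vanishes identically on $E_{0}\cap p^{*}F_{j}$. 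With $X$ now binomial, Theorem \ref{resolution theorem} applies; crepancy is automatic because every blow-up centre has codimension $2$ and, from the normal form $w^{a}-z_{1}\cdots z_{q}$, the ideal of $X$ has order $1$ along $E_{0}\cap p^{*}F_{j}$ (the linear term $z_{l}$ dominates $w^{a}$ at the generic point of the centre), so Lemma \ref{crepant-proposition} yields $K_{X_{N}}\simeq\pi^{*}K_{X}$. The only delicate point is the local bookkeeping — confirming that the displayed normal form is the \emph{only} pattern in which the transversal divisors can meet $X$, so that conditions (3) and (4) genuinely hold at every closed point — but this is forced by the reducedness of $D$ together with the single uniform exponent $a$ of the cyclic cover, and no essential difficulty arises.
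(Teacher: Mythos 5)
Your proposal is correct and follows exactly the paper's route: the paper likewise realizes the cyclic cover $s_{0}^{a}=p^{*}s_{D}$ as a hypersurface of ${\rm Tot}(L)$ binomial with respect to $(\{E_{0}\},\{p^{*}F_{1},\dots,p^{*}F_{t}\})$ and then invokes Theorem \ref{resolution theorem}, leaving the binomial conditions as ``easy to verify.'' Your local-coordinate verification of the normal form $w^{a}-z_{1}\cdots z_{q}$, of conditions (3)--(4), and of the order-one computation for crepancy simply supplies the details the paper omits.
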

For the cyclic cover $X$ constructed in \S\ref{subsection:branch
cover}, we can simply apply the above result to obtain a crepant
resolution $\sigma: \tilde X \to X$.
\subsection{The middle cohomology does not change under resolution}
In this paragraph we show that $\sigma: \tilde X\to X$ induces an
isomorphism on the middle cohomologies. To start with, we show first
that the Hodge structure $H^n(X,\Q)$ is actually pure. This is
mostly easy if one notices that there is another CY $Y$ arising from
the same hyperplane arrangement as $X$ which is indeed
\emph{smooth}. We follow \S2.2 \cite{GSSZ} for the construction of
$Y$, the Kummer cover associated with $\mathfrak A$. Let $A$ be a
matrix whose columns define the hyperplane arrangement $\mathfrak A$
and $B=(b_{ij})$ a matrix fitting into a short exact sequence
$$
0 \to
\C^{m-2}\stackrel{A}{\longrightarrow}\C^{m}\stackrel{B}{\longrightarrow}\C^2\to
0.
$$
We define $Y$ to be the complete intersection of $\frac{m}{r}=2$
degree $r$ hypersurfaces in $\P^{m-1}=\Proj\ \C[z_0:\cdots:z_{m-1}]$
defined by
$$
b_{i1}z_{0}^{r}+\cdots+b_{im}z_{m-1}^r=0,\quad i=1,2.
$$
Using the Jacobian criterion, one sees easily that $Y$ is smooth,
and by the adjunction formula has trivial canonical bundle. The
structure of $Y$ as a Kummer cover is seen as follows: Put
$G_{1}=\oplus_{i=1}^{m}\Z/r/\triangle(\Z/r)$. Here $\triangle$
denotes for the diagonal embedding $\Z/r\to \oplus_{i=1}^{m}\Z/r$.
For $a=(a_{0},\cdots, a_{m})\in G_{1}$, we define an automorphism
$\sigma_{a}: \P^{m-1}\rightarrow \P^{m-1}$ by:
\begin{equation}\notag
\sigma_{a}(z_{0}:\cdots:z_{m-1})=(\zeta_r^{a_{0}}z_{0}:\cdots:\zeta_r^{a_{m-1}}z_{m-1})
\end{equation}
where $\zeta_r$ is a primitive $r$-th root of unit. The matrix $A$
defines a linear embedding $j: \P^n\to \P^{m-1}$ of projective
spaces. Similar to Lemma 2.4 loc. cit., one sees that the map
$$
\P^{m-1}\to \P^{m-1}, (z_0:\cdots:z_{m-1})\mapsto (z_0^{r}:
\cdots:z_{m-1}^r)
$$
realizing $Y$ as the Kummer cover $Y\to j(\P^n)$ with Galois group
$G_1$ and with branch locus $j(H)$. The group $G_{1}$ contains a
distinguished normal subgroup $N_{1}\lhd G_{1}$ of index $r$, the
kernel of the map $a\mapsto \sum_{i=0}^{m-1}a_{i}$. We state the
following result whose proof is referred to Proposition 2.5 loc.
cit..
\begin{lemma}\label{HS of X as a quotient of Y}
It holds that $X\simeq Y/N_1$. Consequently, there is an isomorphism
of pure polarized $\Q$-Hodge structures
$$H^n(X,\Q)\simeq H^n(Y,\Q)^{N_1}.$$
\end{lemma}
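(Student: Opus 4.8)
The plan is to prove the geometric isomorphism $X\simeq Y/N_1$ first and then read off the Hodge-theoretic statement by a transfer argument. For the first part I would exploit the tower $Y\to Y/N_1\to Y/G_1=j(\P^n)\simeq\P^n$ coming from the Kummer realization. Since $N_1\lhd G_1$ is normal of index $r$ with $G_1/N_1\simeq\Z/r$, the intermediate quotient $Y/N_1\to\P^n$ is a cyclic cover of degree $r$, and the task reduces to matching its cyclic-cover data with that of the cover $X$ built in \S\ref{subsection:branch cover}.

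To pin down this data I would compute the $N_1$-invariant monomials in $z_0,\dots,z_{m-1}$. The automorphism $\sigma_a$ scales $\prod_i z_i^{b_i}$ by $\zeta_r^{\sum_i a_ib_i}$, so invariance under all $a\in N_1$, tested against the elements $e_i-e_j\in N_1$, forces the exponents $b_i$ to be mutually congruent modulo $r$. Hence the invariant subalgebra is generated by $w_i:=z_i^r$ — which restrict on $j(\P^n)$ to the linear forms $s_{i+1}$ defining $H_{i+1}$ — together with the single monomial $u:=z_0z_1\cdots z_{m-1}$, subject to the relation $u^r=\prod_{i=0}^{m-1}w_i=\prod_{i=1}^{m}s_i$. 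As $u$ transforms by a primitive character of $G_1/N_1$ and satisfies $u^r=\prod_i s_i$, the cover $Y/N_1\to\P^n$ is exactly the $r$-fold cyclic cover attached to $\mathcal{L}=\mathcal{O}_{\P^n}(m/r)=\mathcal{O}_{\P^n}(2)$ and to the branch section $\prod_i s_i\in\Gamma(\P^n,\mathcal{L}^{\otimes r})$; this is precisely the defining datum of $X$, and the correspondence $u\leftrightarrow s$ yields $X\simeq Y/N_1$. As a consistency check, the inertia of $Y\to\P^n$ along $H_{i+1}$ is generated by $e_i$, whose image in $G_1/N_1$ has order $r$, so $Y/N_1\to\P^n$ is totally ramified along each $H_i$ and \'etale elsewhere, exactly as for $X$.

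For the cohomological consequence I would use that $N_1$ is a finite group acting on the smooth projective variety $Y$ by algebraic automorphisms with quotient map $q\colon Y\to Y/N_1=X$. The averaging operator $\frac{1}{|N_1|}\sum_{g\in N_1}g^*$ is an idempotent projecting $H^n(Y,\Q)$ onto its $N_1$-invariants, and the usual transfer identity ($q_*q^*=|N_1|$) shows $q^*$ induces an isomorphism $H^n(X,\Q)\xrightarrow{\sim}H^n(Y,\Q)^{N_1}$. Because $q$ and each $g$ are morphisms of complex algebraic varieties, $q^*$ and the $N_1$-action respect Deligne's mixed Hodge structures, so this is an isomorphism of mixed Hodge structures. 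Since $Y$ is smooth and projective, $H^n(Y,\Q)$ is pure of weight $n$; the cup-product pairing furnishes an $N_1$-invariant polarization (each $g$ being an automorphism preserves it), which restricts to the invariant subspace. Therefore $H^n(Y,\Q)^{N_1}$, and with it $H^n(X,\Q)$, is a pure polarized $\Q$-Hodge structure — this is the real gain, as it promotes the a priori mixed Hodge structure on the singular $X$ to a pure one.

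The step I expect to be the main obstacle is the explicit identification $X\simeq Y/N_1$: namely verifying that the $N_1$-invariant subalgebra is generated exactly by the $w_i$ and $u$ with the single relation $u^r=\prod_i s_i$, and that this reproduces the line bundle $\mathcal{O}_{\P^n}(2)$ together with the correct branch section of $X$ rather than some twist of it. By contrast the cohomological half is essentially formal, resting only on the transfer isomorphism over $\Q$ together with the functoriality of mixed Hodge structures and the purity of $H^n(Y,\Q)$.
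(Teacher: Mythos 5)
Your proposal is correct, and it supplies in full the argument that the paper itself does not write out: the authors simply refer the proof to Proposition~2.5 of \cite{GSSZ}. Your two steps --- identifying $Y/N_1$ with the $r$-fold cyclic cover by computing the $N_1$-invariant monomials (all exponents congruent mod $r$, hence generated by the $z_i^r$ and $u=z_0\cdots z_{m-1}$ with $u^r=\prod_i s_i$, matching the defining datum $s^r=\prod_i s_i$ of $X$ for $L=\sO_{\P^n}(2)$), and then the transfer isomorphism $q^*:H^n(X,\Q)\xrightarrow{\sim}H^n(Y,\Q)^{N_1}$ combined with the purity and polarizability of $H^n(Y,\Q)$ for the smooth projective $Y$ --- are exactly the standard argument that the cited proposition encapsulates.
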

\begin{proposition}\label{thm:Hodge structures under crepant resolution}
If $p,q\geq 0$ and $p\neq q$, then $\sigma$ induces an isomorphism
$$\sigma^{*}: H^{p,q}(X)\xrightarrow{\sim}H^{p,q}(\tilde{X}).$$ In
particular, one has the isomorphism of $\Q$-PVHS:
$$
\sigma^{*}: H^{n}(X, \Q)\xrightarrow{\sim}H^{n}(\tilde{X}, \Q).
$$
\end{proposition}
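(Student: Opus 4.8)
The plan is to compare the cohomologies of $X$ and $\tilde X$ through the exceptional locus of $\sigma$, exploiting that $\sigma$ is, by Theorem \ref{resolution theorem}, a composition of blow-ups along smooth centers, each of which replaces its center by a $\P^1$-bundle (Proposition \ref{stable prop of binomial hypersurfaces}(3),(5)). First I record the two inputs that frame the argument: by Lemma \ref{HS of X as a quotient of Y} the Hodge structure $H^n(X,\Q)\cong H^n(Y,\Q)^{N_1}$ is pure of weight $n$, while $H^n(\tilde X,\Q)$ is pure because $\tilde X$ is smooth projective, and $\sigma^\ast$ is a morphism of $\Q$-Hodge structures by functoriality of Deligne's mixed Hodge theory. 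I also note why the displayed $\Q$-PVHS isomorphism is an immediate consequence of the $(p,q)$-statement: since $n$ is odd, every pair $(p,q)$ with $p+q=n$ automatically has $p\neq q$, so an isomorphism on each off-diagonal Hodge component forces an isomorphism of all of $H^n\otimes\C$, hence of the $\Q$-structures; in the family over $\Modulin$ these glue to an isomorphism of $\Q$-PVHS.

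The main tool is the Mayer--Vietoris (proper, or abstract-blow-up, descent) long exact sequence of mixed Hodge structures attached to a blow-up square. I would run it inductively on the blow-ups $\pi_i:X_i\to X_{i-1}$ produced by the algorithm $(\ast)$, writing $C_i$ for the smooth center and $\tilde C_i=\pi_i^{-1}(C_i)$ for the exceptional locus, a $\P^1$-bundle over $C_i$ by Proposition \ref{stable prop of binomial hypersurfaces}(5). The square $(\tilde C_i,X_i;C_i,X_{i-1})$ yields an exact sequence $\cdots\to H^{k}(X_{i-1})\to H^k(C_i)\oplus H^k(X_i)\to H^k(\tilde C_i)\to H^{k+1}(X_{i-1})\to\cdots$ of mixed Hodge structures in which $\pi_i^\ast$ is a component of the map out of $H^k(X_{i-1})$. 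Since passing to a Hodge component $H^{p,q}=\mathrm{Gr}_F^p\,\mathrm{Gr}^W_{p+q}$ is an exact functor on mixed Hodge structures, applying it to this sequence and composing over all $i$ reduces the Proposition to a single vanishing: if all error terms $H^\ast(C_i)$ and $H^\ast(\tilde C_i)$ are of \emph{Hodge--Tate} type, i.e. $H^\ast=\bigoplus_p H^{p,p}$, then their $(p,q)$-parts vanish for $p\neq q$, forcing $\pi_i^\ast$ — and hence $\sigma^\ast$ — to be an isomorphism on every off-diagonal piece. (Equivalently one may use the single square $(E,\tilde X;\mathrm{Sing}(X),X)$ with $E=\sigma^{-1}(\mathrm{Sing}(X))$.)

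The whole argument is thus concentrated in one geometric point, which I expect to be the main obstacle: verifying that every center $C_i$, and therefore its $\P^1$-bundle $\tilde C_i$, has Hodge--Tate cohomology. I would establish this by tracking the strata through the algorithm using Proposition \ref{stable prop of binomial hypersurfaces}(2),(4),(5). At the initial stage $\mathfrak{E}=\{E_0\}$ is a single divisor, so by part (2) each component of $\mathrm{Sing}(X)$ has the form $E_0\cap p^\ast H_{j_1}\cap p^\ast H_{j_2}$; lying over the total-ramification locus $E_0$, it maps isomorphically to the linear subspace $H_{j_1}\cap H_{j_2}\cong\P^{n-2}$, whose cohomology is Hodge--Tate. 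Parts (4),(5) then show that at every later stage the image in $X$ of a center is again an intersection of the original divisors (a cover of a linear subspace, isomorphic to it on the ramification), and that the center itself is obtained from such a projective space by a tower of $\P^1$-bundles; as $H^\ast(\P^k)$ and the Leray--Hirsch decomposition for a $\P^1$-bundle are purely of type $(\ast,\ast)$, Hodge--Tateness propagates through the induction. The delicate points are the bookkeeping that keeps the centers smooth covers of linear subspaces (so that the cyclic/Kummer covering introduces no transcendental cohomology) and the strictness of the morphisms of mixed Hodge structures that legitimizes passing to $(p,q)$-components termwise. Finally, I would record the compatible normalization on the top piece: crepancy $K_{\tilde X}\simeq\sigma^\ast K_X$ (Theorem \ref{resolution theorem}) carries the nowhere-vanishing $n$-form on $X$ to that on $\tilde X$, so $\sigma^\ast$ is visibly an isomorphism on $H^{n,0}$ and, by conjugation, on $H^{0,n}$, consistent with the general statement.
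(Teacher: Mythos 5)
Your proposal is correct and follows essentially the same route as the paper: both run the descent (Mayer--Vietoris) sequence of mixed Hodge structures through the blow-up tower produced by the algorithm $(*)$ and reduce everything to the vanishing of the off-diagonal Hodge components of the centers and their exceptional $\P^{1}$-bundles, which is tracked inductively via Proposition \ref{stable prop of binomial hypersurfaces}. The only cosmetic difference is that the paper obtains injectivity of $\sigma^{*}$ separately by citing Theorem 5.41 of \cite{PS}, whereas you extract both injectivity and surjectivity from the same exact sequence; both variants work.
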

We need some lemmas.
\begin{lemma}\label{lemma:Steenbrink and Peters book lemma}
Let $f: \tilde{X}\rightarrow X$ be a proper modification with
discriminant $D$. Put $E=f^{-1}(D)$. For $p, q\geq 0$, if
$H^{p,q}(X)=H^{p,q}(E)=0$, then $H^{p,q}(\tilde{X})=0$.
\end{lemma}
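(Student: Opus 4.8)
The plan is to deduce the vanishing from the Mayer--Vietoris long exact sequence of mixed Hodge structures attached to the Cartesian ``abstract blow-up'' square
\[
\begin{CD}
E @>>> \tilde{X} \\
@VVV @VVV \\
D @>>> X
\end{CD}
\]
whose vertical arrows are $f$ and its restriction $g\colon E\to D$, and whose horizontal arrows are the closed immersions $i\colon D\hookrightarrow X$ and $j\colon E\hookrightarrow\tilde{X}$. Because $f$ is a proper modification inducing an isomorphism over $X\setminus D$, Deligne's mixed Hodge theory (in the form developed in the book of Peters and Steenbrink) attaches to this square a long exact sequence
\[
\cdots\to H^k(X)\xrightarrow{(f^*,\,i^*)}H^k(\tilde{X})\oplus H^k(D)\xrightarrow{j^*-g^*}H^k(E)\to H^{k+1}(X)\to\cdots
\]
in which every map is a morphism of mixed Hodge structures. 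This sequence is the one genuine input I would take from the literature.

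Next I would invoke the purely formal fact that, for a fixed pair $(p,q)$, the functor $H\mapsto\mathrm{Gr}_F^{p}\,\mathrm{Gr}^{W}_{p+q}H_{\C}$ is exact on the abelian category of mixed Hodge structures, and that applied to the degree-$(p+q)$ cohomology of a variety it yields precisely the group denoted $H^{p,q}$; for a smooth projective variety this is the classical Hodge summand, so the notation is consistent with its use for $\tilde{X}$. The three groups $H^{p+q}(X)$, $H^{p+q}(\tilde{X})\oplus H^{p+q}(D)$ and $H^{p+q}(E)$ form a consecutive exact segment of the sequence above, all lying in cohomological degree $p+q$; applying the functor to this segment produces the exact three-term sequence
\[
H^{p,q}(X)\longrightarrow H^{p,q}(\tilde{X})\oplus H^{p,q}(D)\longrightarrow H^{p,q}(E).
\]

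The conclusion is then a one-line diagram chase. The hypothesis $H^{p,q}(X)=0$ forces the second map to be injective, and the hypothesis $H^{p,q}(E)=0$ makes its target zero; hence $H^{p,q}(\tilde{X})\oplus H^{p,q}(D)=0$, and in particular $H^{p,q}(\tilde{X})=0$, as claimed. The argument incidentally also yields $H^{p,q}(D)=0$.

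I expect the only delicate point to be the first step, namely the construction of the blow-up long exact sequence \emph{as a sequence of mixed Hodge structures} rather than merely of $\Q$-vector spaces: what matters is that the connecting homomorphisms strictly respect the Hodge and weight filtrations, for it is exactly this strictness that lets exactness survive passage to the $(p,q)$-graded pieces. Granting that --- which is standard, and where I would cite Peters--Steenbrink --- everything else is formal.
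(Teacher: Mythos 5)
Your proof is correct and follows essentially the same route as the paper: the paper simply cites Corollary--Definition 5.37 of Peters--Steenbrink, which packages exactly the Mayer--Vietoris sequence of mixed Hodge structures for the discriminant square together with the exactness of $H\mapsto\mathrm{Gr}_F^{p}\,\mathrm{Gr}^{W}_{p+q}H_{\C}$ (a consequence of strictness) that you spell out. Nothing is missing; you have merely unpacked the citation.
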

\begin{proof}
This follows directly from Corollary-Definition $5.37$ in \cite{PS}.
\end{proof}
\begin{lemma}\label{lemma:h^{p,q}=0lemma}
Let $\pi: \tilde{V}\rightarrow V$ be a surjective morphism between
projective varieties, $Z\subset V$ a closed subvariety such that
$\pi^{-1}(Z)\xrightarrow{\pi}Z $ is a $\P^{1}-$bundle and
$\pi^{-1}(V-Z)\xrightarrow{\pi}V-Z$ is an isomorphism. For $p, q\geq
0$ and $p\neq q$, if $H^{p,q}(Z)=0$, then the natural homomorphism
$H^{p,q}(V)\xrightarrow{\pi^{*}} H^{p,q}(\tilde{V})$ is surjective.
\end{lemma}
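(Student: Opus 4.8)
The plan is to read off the surjectivity from the Mayer--Vietoris sequence of mixed Hodge structures attached to the ``blow-up square'' determined by $\pi$, combined with the projective bundle formula for the $\P^{1}$-bundle $E:=\pi^{-1}(Z)\to Z$. This is the same mechanism that powers Lemma \ref{lemma:Steenbrink and Peters book lemma}; the difference is that here I will exploit it at the level of an individual Hodge piece, and feed in the $\P^{1}$-bundle structure to control the relevant cohomology of $E$.

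First I would record the square. Writing $\rho\colon E\to Z$ for the bundle projection, $i_{Z}\colon Z\hookrightarrow V$ and $i_{E}\colon E\hookrightarrow\tilde V$, the hypotheses say exactly that
\begin{CD}
E @>i_E>> \tilde V \\
@V\rho VV @VV\pi V \\
Z @>i_Z>> V
\end{CD}
is cartesian with $\pi$ proper and an isomorphism over $V-Z$. For such a square the theory of Peters--Steenbrink (the source of Lemma \ref{lemma:Steenbrink and Peters book lemma}) yields a long exact sequence of mixed Hodge structures
$$
\cdots \to H^{k}(V)\xrightarrow{(\pi^{*},\,i_Z^{*})} H^{k}(\tilde V)\oplus H^{k}(Z)\xrightarrow{\,i_E^{*}-\rho^{*}\,} H^{k}(E)\to H^{k+1}(V)\to\cdots,
$$
in which every arrow is a morphism of mixed Hodge structures.

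The key point is that these morphisms are strict for the weight and Hodge filtrations, so applying the exact functor $\mathrm{Gr}_F^{p}\mathrm{Gr}^{W}_{p+q}$ — that is, passing to $(p,q)$-components in degree $k=p+q$ — preserves exactness at the middle term. Hence $\mathrm{im}(\pi^{*}|_{(p,q)})=\ker\big((i_E^{*}-\rho^{*})|_{(p,q)}\big)$ inside $H^{p,q}(\tilde V)\oplus H^{p,q}(Z)$. Since $H^{p,q}(Z)=0$ by hypothesis, the second summand drops out and this kernel is simply $\ker\big(i_E^{*}\colon H^{p,q}(\tilde V)\to H^{p,q}(E)\big)$. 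It therefore suffices to show $H^{p,q}(E)=0$. Here I would invoke Leray--Hirsch for the $\P^{1}$-bundle $\rho$, giving an isomorphism of Hodge structures $H^{p,q}(E)\cong H^{p,q}(Z)\oplus H^{p-1,q-1}(Z)$, the second summand being the twist by the relative hyperplane class $\xi\in H^{1,1}(E)$. Because $p\neq q$ forces $p-1\neq q-1$, both bidegrees lie off the diagonal, so both summands vanish under the off-diagonal vanishing assumed for $Z$; thus $H^{p,q}(E)=0$, and exactness at the middle term gives $\mathrm{im}(\pi^{*}|_{(p,q)})=H^{p,q}(\tilde V)$, i.e.\ surjectivity (indeed bijectivity).

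The main obstacle, and where I would spend the most care, is twofold. First, the Hodge-type bookkeeping in Leray--Hirsch must be tracked precisely: it is exactly the appearance of the \emph{second} summand $H^{p-1,q-1}(Z)$ that reveals the hypothesis must be read as vanishing of \emph{all} off-diagonal Hodge pieces of $Z$, and that the restriction $p\neq q$ is what keeps both relevant bidegrees off the diagonal. Second, I must justify that taking $(p,q)$-parts is exact even though $V$ and $\tilde V$ may be singular, so purity is unavailable: the argument rests solely on strictness of morphisms of mixed Hodge structures together with exactness of $\mathrm{Gr}_F^{p}\mathrm{Gr}^{W}_{p+q}$, applied to the Peters--Steenbrink sequence. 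It is worth noting that only exactness at the middle term is needed, so one never has to prove injectivity of $\pi^{*}$ or that the long exact sequence splits into short exact pieces.
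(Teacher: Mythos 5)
Your proof is correct and follows essentially the same route as the paper's one-line argument: both rest on the mixed-Hodge-structure Mayer--Vietoris sequence for the discriminant square from Peters--Steenbrink (the source of Lemma \ref{lemma:Steenbrink and Peters book lemma}) combined with the Leray--Hirsch theorem for the $\P^{1}$-bundle $\pi^{-1}(Z)\to Z$. Your write-up is in fact more careful than the paper's, since it makes explicit the passage to $(p,q)$-graded pieces via strictness of morphisms of mixed Hodge structures and the need for the vanishing of $H^{p-1,q-1}(Z)$ as well as $H^{p,q}(Z)$ --- points the paper leaves implicit but which are satisfied in its applications.
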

\begin{proof}
This follows from Lemma \ref{lemma:Steenbrink and Peters book lemma}
and the Leray-Hirsch Theorem for the $\P^{1}$-bundle
$\pi^{-1}(Z)\xrightarrow{\pi}Z $.
\end{proof}
We come to the proof of Proposition \ref{thm:Hodge structures under
crepant resolution}.
\begin{proof}
Set $M={\rm Tot}(L)$ and $E_{1}=\{s=0\}, F_{j}=\{p^{*}s_{j}=0\}$
divisors in $M$. Put
$$\mathfrak{E}=\{E_{1}\}, \mathfrak{F}=\{F_{1},\cdots, F_{m}\}.$$
Then $X$ is a binomial hypersurface of $M$ with respect to
$(\mathfrak{E},\mathfrak{F})$. The crepant resolution algorithm
applied to the data $X, M, (\mathfrak{E},\mathfrak{F})$ provides us
with the blow-up sequence
\begin{equation}\notag
\tilde{X}=X_{N}\xrightarrow{\sigma_{N}}X_{N-1}\xrightarrow{\sigma_{N-2}}\cdots
\xrightarrow{\sigma_{1}}X
\end{equation}
such that for any $i=1,2,\cdots, N$, $X_{i}$ is a binomial
hypersurface of $M_{i}$ with respect to
$(\mathfrak{E}_{i},\mathfrak{F}_{i})$, and $\tilde{X}=X_{N}$ is
smooth and meet transversally to the divisors in
$\mathfrak{E}_{N}\cup \mathfrak{F}_{N}$. An induction using
Proposition \ref{stable prop of binomial hypersurfaces} and Lemma
\ref{lemma:h^{p,q}=0lemma} shows that for any $i=1,2,\cdots, N$, for
any elements $D_{1},\cdots, D_{k}\in
\mathfrak{E}_{i}\cup\mathfrak{F}_{i}$, if $D_{1}\cap\cdots \cap
D_{k}\subset X_{i}$, then $H^{p,q}(D_{1}\cap\cdots \cap D_{k})=0$,
for any $p, q\geq 0$ and $p\neq q$. Then a further induction using
$(3)$ of Proposition \ref{stable prop of binomial hypersurfaces} and
Lemma \ref{lemma:h^{p,q}=0lemma} shows that
$H^{p,q}(X)\xrightarrow{\sigma^{*}}H^{p,q}(\tilde{X})$ is
surjective, for any $p, q\geq 0$ and $p\neq q$, where
$\sigma=\sigma_{N}\circ\cdots\circ\sigma_{1}$. Theorem 5.41 in
\cite{PS} shows that
$H^{p,q}(X)\xrightarrow{\sigma^{*}}H^{p,q}(\tilde{X})$ is injective,
for any $p,q\geq 0$. So we finally get that
$H^{p,q}(X)\xrightarrow{\sigma^{*}}H^{p,q}(\tilde{X})$ is an
isomorphism, for any $p,q\geq 0$ and $p\neq q$.
\end{proof}
\begin{corollary}
Let $\pi: X\rightarrow \P^{n}$ be the $r$-fold cyclic cover of
$\P^{n}$ branched along $m$ hyperplanes in general position in
\S\ref{subsection:branch cover}. Let $\tilde{X}\xrightarrow{\sigma}
X$ be the crepant resolution constructed after Corollary
\ref{branched covering resolution}. The obtained $\tilde{X}$ is a
smooth projective CY manifold.
\end{corollary}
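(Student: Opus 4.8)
The plan is to check, one at a time, the three properties that make $\tilde X$ a Calabi--Yau manifold: it is smooth and projective, its canonical bundle is trivial, and $H^i(\tilde X,\mathcal{O}_{\tilde X})=0$ for $0<i<n$. The first two are essentially bookkeeping on what has already been established. For smoothness and projectivity, recall that by Corollary~\ref{branched covering resolution} the morphism $\sigma$ is the output of the algorithm $(*)$, and Theorem~\ref{resolution theorem} guarantees that the final model $\tilde X=X_N$ is a smooth hypersurface in the smooth ambient variety $M_N$; the same theorem records that $\sigma$ is a projective morphism obtained as a composition of blow-ups along smooth centers, so, $X$ being projective, $\tilde X$ is projective as well, and it is connected as a modification of the irreducible variety $X$. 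For triviality of the canonical bundle, Theorem~\ref{resolution theorem} states that $\sigma$ is crepant, i.e.\ $K_{\tilde X}\simeq\sigma^{*}K_X$; since the cyclic cover $X$ has $K_X\simeq\mathcal{O}_X$ (as recalled in \S\ref{subsection:branch cover}), we obtain $K_{\tilde X}\simeq\sigma^{*}\mathcal{O}_X\simeq\mathcal{O}_{\tilde X}$.

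The real content is the vanishing $H^i(\tilde X,\mathcal{O}_{\tilde X})=0$ for $0<i<n$. Here I would apply Proposition~\ref{thm:Hodge structures under crepant resolution} with $(p,q)=(0,i)$: because $0\neq i$ it yields $H^{0,i}(X)\xrightarrow{\ \sim\ }H^{0,i}(\tilde X)=H^i(\tilde X,\mathcal{O}_{\tilde X})$, so it suffices to prove $H^{0,i}(X)=0$. For this I invoke the identification $X\simeq Y/N_1$ of Lemma~\ref{HS of X as a quotient of Y}. Running the argument of Proposition~2.5 of \cite{GSSZ} in every degree (not only in degree $n$) gives $H^i(X,\Q)\simeq H^i(Y,\Q)^{N_1}$ as Hodge structures; since $Y$ is smooth projective the right-hand side is pure of weight $i$, whence $H^{0,i}(X)\simeq H^i(Y,\mathcal{O}_Y)^{N_1}$. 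Finally $Y$ is a smooth complete intersection of dimension $n$ in $\P^{m-1}$, so by the weak Lefschetz theorem its Hodge numbers away from the middle degree agree with those of $\P^{n}$; in particular $H^i(Y,\mathcal{O}_Y)=0$ for $0<i<n$. Therefore $H^{0,i}(X)=0$, and the displayed isomorphism gives $H^i(\tilde X,\mathcal{O}_{\tilde X})=0$ in the same range.

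The step I expect to be the main obstacle is this last one, and more precisely the Hodge-theoretic input just used: the passage from Lemma~\ref{HS of X as a quotient of Y}, stated only for $H^n$, to the purity and transfer identification of $H^i(X)$ in all degrees. A clean way to sidestep any discussion of a Hodge structure on the singular $X$ is to note that $X=Y/N_1$ has quotient, hence rational, singularities; then the crepant resolution $\sigma$ satisfies $R\sigma_{*}\mathcal{O}_{\tilde X}\simeq\mathcal{O}_X$, so that $H^i(\tilde X,\mathcal{O}_{\tilde X})\simeq H^i(X,\mathcal{O}_X)\simeq H^i(Y,\mathcal{O}_Y)^{N_1}$, and the complete-intersection vanishing for $Y$ finishes the argument exactly as before. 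Either route shows that $\tilde X$ is a smooth projective variety with $K_{\tilde X}\simeq\mathcal{O}_{\tilde X}$ and no intermediate holomorphic cohomology, i.e.\ a Calabi--Yau manifold.
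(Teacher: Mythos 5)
Your proof is correct and follows essentially the same route as the paper: smoothness, projectivity and crepancy from Theorem~\ref{resolution theorem}, and the vanishing $H^{i}(\tilde X,\sO_{\tilde X})=0$ via Proposition~\ref{thm:Hodge structures under crepant resolution} combined with $H^{0,i}(X)=H^{0,i}(Y)^{N_1}=0$ for the smooth complete intersection $Y$. Your observation that Lemma~\ref{HS of X as a quotient of Y} is stated only in degree $n$ (so one must either rerun the transfer argument in all degrees or argue via rational singularities) is a fair point about a detail the paper leaves implicit, but it does not change the substance of the argument.
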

\begin{proof}
By Theorem \ref{resolution theorem}, $\tilde{X}$ is smooth. Also, as
the morphism $\sigma: \tilde{X}\rightarrow X$ is projective by the
same result, $\tilde{X}$ is projective whose canonical bundle is
trivial. By Proposition \ref{thm:Hodge structures under crepant
resolution}, for any $0<i<n$, we have $$ H^{i}(\tilde{X},
\sO_{\tilde{X}})\simeq H^{0,i}(\tilde{X})\simeq
H^{0,i}(X)=H^{0,i}(Y)^{N_1}=0.$$ This shows the result.
\end{proof}

\section{The Hodge structure of the cyclic cover}
Let $\pi: X\to \P^n$ be the $r$-fold cyclic cover branched along
$H=\sum_{i=1}^{m}H_i$ where $\{H_1,\cdots,H_{m}\}$ is a hyperplane
arrangement of $\P^n$ in general position. In this section we
investigate the Hodge structure $H^n(X,\Q)$. We first record its
Hodge numbers.
\begin{lemma}\label{Hodge number}
Let $h^{p,q}=\dim H^{p,q}(X)$ for $p,q\geq 0$ and $p+q=n$. One has
\begin{equation}\notag
h^{p,q}(X)=\left\{
             \begin{array}{ll}
               q+1, & \hbox{ if $q$ is even;} \\
               n+1-q, & \hbox{ if $q$ is odd.}
             \end{array}
           \right.
\end{equation}
\end{lemma}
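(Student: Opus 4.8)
The plan is to use the Galois action of $G=\mu_r=\Gal(X/\P^n)$ to split the problem into a family of twisted cohomology computations on $\P^n$. First I would decompose
$$
H^n(X,\C)=\bigoplus_{k=0}^{r-1}H^n(X)_{(k)},
$$
where a fixed generator $\sigma$ acts on $H^n(X)_{(k)}$ by $\zeta_r^{k}$. Since $n$ is odd, $H^n(\P^n,\C)=0$ and hence the invariant part $H^n(X)_{(0)}=0$, so only $1\le k\le r-1$ survive. As $p+q=n$ forces $p\neq q$ when $n$ is odd, Proposition \ref{thm:Hodge structures under crepant resolution} identifies $H^{p,q}(X)$ with $H^{p,q}(\tilde X)$; thus each $H^{p,q}(X)_{(k)}$ is a genuine piece of a pure weight-$n$ Hodge structure and may be computed on the smooth model.

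Next I would realize these eigenspaces through logarithmic de Rham data on $\P^n$. Put $L=\sO_{\P^n}(2)$ and $D=\sum_{i=1}^m H_i$, so that $L^{r}=\sO(m)=\sO(D)$. The standard eigensheaf description of a cyclic cover identifies $H^n(X)_{(k)}$, together with its Hodge filtration, with the hypercohomology of the twisted logarithmic complex $(\Omega^\bullet_{\P^n}(\log D)\otimes L^{-k},\nabla)$ attached to the rank-one local system $\mathbb{L}_k$ on $\P^n\setminus D$; since the residues $k/r$ lie in $(0,1)$ there is no rounding and the Hodge--de Rham spectral sequence degenerates, giving
$$
h^{p,q}(X)_{(k)}=\dim H^{q}\!\left(\P^n,\ \Omega^{p}_{\P^n}(\log D)\otimes\sO(-2k)\right),\qquad p+q=n.
$$
The normalisations are fixed by the extreme cases $p=0$ (where this is $H^n(\P^n,\sO(-2k))$) and $p=n$ (where $\Omega^n_{\P^n}(\log D)\otimes\sO(-2k)=\sO(2-2k)$ forces the unique holomorphic $n$-form into the eigenspace $k=1$).

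To evaluate these groups I would use the weight (residue) filtration of $\Omega^p_{\P^n}(\log D)$. General position makes every $s$-fold intersection $D_I$ a linear $\P^{n-s}$, so
$$
\mathrm{gr}^W_s\,\Omega^p_{\P^n}(\log D)=\bigoplus_{|I|=s}\Omega^{p-s}_{\P^{n-s}},\qquad 0\le s\le p,
$$
with $\binom{m}{s}$ summands. After twisting by $\sO(-2k)$ Bott's formula shows each graded piece has cohomology only in its top degree $n-s$, so the weight spectral sequence collapses to a single residue complex of Bott numbers placed in degrees $n-p,\dots,n$. The desired Hodge number is the cohomology of this complex at its end term; once the complex is exact off that term, this equals the alternating sum
$$
h^{p,q}(X)_{(k)}=\binom{2k-1}{q}\sum_{s=0}^{p}(-1)^{p-s}\binom{m}{s}\binom{2k+p-s}{2k}.
$$
Summing over $1\le k\le r-1$ and reducing the resulting binomial identities with $m=n+3$, $r=\tfrac{n+3}{2}$ should produce the claimed values $q+1$ and $n+1-q$.

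The main obstacle is precisely the exactness of that residue complex off its end term, equivalently the vanishing $H^{q}(\P^n,\Omega^p_{\P^n}(\log D)\otimes\sO(-2k))=0$ for $p+q\neq n$, i.e.\ the concentration of the eigenspace cohomology of $\mathbb{L}_k$ in the middle degree. This is where general position must enter essentially: one must prove that the Gysin--residue maps between the various strata contributions have maximal rank. I expect to settle this either by a direct rank computation for the explicit linear maps attached to the arrangement, or by a non-resonance/vanishing theorem for $\mathbb{L}_k$; the delicate point is the behaviour along the deepest intersection strata (codimension $\ge r$), where $\mathbb{L}_k$ becomes resonant. As a consistency check, for $n=3$ the formula yields eigenspace Hodge types $(1,3,0,0)$ for $k=1$ and $(0,0,3,1)$ for $k=2$, which sum to $(h^{3,0},h^{2,1},h^{1,2},h^{0,3})=(1,3,3,1)$, in agreement with the assertion.
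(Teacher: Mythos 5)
Your route is genuinely different from the paper's. The paper does not compute on $\P^n$ at all: it invokes Lemma \ref{HS of X as a quotient of Y} to identify $H^n(X,\Q)$ with the $N_1$-invariants of $H^n(Y,\Q)$ for the \emph{smooth} complete intersection Kummer cover $Y\subset\P^{m-1}$, and then reads off the eigenspace dimensions from Terasoma's Jacobian-ring description \cite{T}, noting that the answer is already tabulated in Lemma 8.2 of \cite{DK} for $\mu=(\frac{1}{r},\cdots,\frac{1}{r})$. Your plan --- decompose under the $\mu_r$-action and compute each eigenspace as twisted logarithmic de Rham cohomology on $\P^n$ --- is a legitimate alternative, and your closed formula passes the $n=3$ consistency check. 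But as written it is not yet a proof.

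The step you yourself flag as ``the main obstacle'' is exactly the missing mathematical content: you need $H^q(\P^n,\Omega^p_{\P^n}(\log D)\otimes\sO(-2k))=0$ for $q\neq n-p$, and you only say you ``expect to settle'' it. Without this, the alternating sum over the weight-graded pieces computes an Euler characteristic, not a Hodge number. (The vanishing is in fact true and is best obtained by citation rather than by the rank computation you propose: for $p+q<n$ it is the logarithmic Akizuki--Nakano vanishing of Esnault--Viehweg applied to the ample twist $\sO(2k)$, and for $p+q>n$ it follows by Serre duality, since $\sO(-2k)\cong\sO(-D)\otimes\sO(m-2k)$ with $m-2k>0$ ample. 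But this must be said.) A second unaddressed point: the hypercohomology of $(\Omega^\bullet_{\P^n}(\log D)\otimes\sO(-2k),\nabla)$ computes $H^n(\P^n\setminus D,\mathbb{L}_k)$, which carries a \emph{mixed} Hodge structure that can strictly contain the pure eigenspace $H^n(X)_{(k)}$ you are after; you correctly observe that $\mathbb{L}_k$ is resonant along strata of codimension $\geq r$ (which exist here, since $n=2r-3\geq r$), and it is precisely in the resonant case that purity of $H^n(U,\mathbb{L}_k)$ can fail. You must therefore either prove purity of these eigenspaces, or replace this identification by the Esnault--Viehweg eigensheaf description of $H^{p,q}(\tilde X)$ for a concrete resolution and transfer back through Proposition \ref{thm:Hodge structures under crepant resolution}. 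Finally, the closing reduction of the binomial sums to $q+1$ and $n+1-q$ is asserted (``should produce'') rather than carried out, though that part is routine.
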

\begin{proof}
By Lemma \ref{HS of X as a quotient of Y}, we can derive the Hodge
numbers of $X$ from those of $Y$, which is a smooth complete
intersection. By the work of T. Terasoma \cite{T}, one can represent
the cohomology classes of $Y$ by a certain Jacobian ring, together
with an explicit description of the action of $G_1$. After the
computation has been implemented, we found that Lemma 8.2 \cite{DK}
actually contains our result (set
$\mu=(\frac{1}{r},\cdots,\frac{1}{r})$ in the cited lemma). The
detail is therefore omitted.
\end{proof}
\subsection{The cyclic cover of $\P^1$ branched along $m$ distinct
points}\label{cyclic cover of P^1} For $m$ distinct points $p_i,
1\leq i\leq m$ in $\P^1$, we consider the $r$-fold cyclic cover of
$\P^1$ branched along $\sum_ip_i$. Call it the curve $C$, and let
$p: C\to \P^1$ be the natural projection. Fix a generator $\iota\in
\Aut(C|\P^1)$. Let $\Q(\zeta_r)\subset \C$ be the $r$-th cyclotomic
field. The induced action $\iota^*$ on $V_{\Q}:=H^1(C,\Q)$ induces a
decomposition of $V_{\Q(\zeta_r)}:=V_{\Q}\otimes \Q(\zeta_r)$ into
direct sum of $\Q(\zeta_r)$-PHSs:
$$
V_{\Q(\zeta_r)}=\oplus_{i=0}^{r-1}V_i,
$$
where $V_i$ is the eigenspace of $\iota^*$ with eigenvalue
$\zeta_r^i$. The following lemma is well known.
\begin{lemma}\label{decomposition of weight one HS}
Let $V_i\otimes \C=V_i^{1,0}\oplus V_i^{0,1}$ be the induced Hodge
decomposition from the weight one PHS of $C$. It holds that
$\overline{V_i^{1,0}}=V_{r-i}^{0,1}$ and
$$
\dim V_0=0, \quad \dim V_i^{1,0}=2i-1, 1\leq i\leq r-1.
$$
\end{lemma}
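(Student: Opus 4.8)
The plan is to use the explicit affine model $C\colon y^r=\prod_{i=1}^m(x-p_i)$ with $m=2r$, together with the standard basis of holomorphic differentials on a cyclic cover of $\P^1$. I would fix the generator $\iota$ of $\Aut(C|\P^1)$ acting by $\iota^*y=\zeta_r^{-1}y$, so that $\iota^*$ multiplies each monomial form $\omega_{a,b}:=x^a\,dx/y^b$ by $\zeta_r^{b}$; thus $\omega_{a,b}$ lies in the eigenspace for $\zeta_r^{b}$. The two easy assertions come first. Since $V_0$ is the $\langle\iota\rangle$-invariant part of $H^1(C,\C)$, it is the pullback of $H^1(C/\langle\iota\rangle,\C)=H^1(\P^1,\C)=0$, whence $\dim V_0=0$. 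The relation $\overline{V_i^{1,0}}=V_{r-i}^{0,1}$ follows because $\iota^*$ is defined over $\Q$ and hence commutes with complex conjugation: if $v\in V_i$ then $\iota^*\bar v=\overline{\zeta_r^i v}=\zeta_r^{-i}\bar v$, so $\bar v\in V_{r-i}$, while conjugation interchanges $H^{1,0}$ and $H^{0,1}$.

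For the dimension count I would exhibit an explicit basis of $H^0(C,\Omega^1_C)$. First I would record the ramification data: each of the $m$ branch points is totally ramified with $y$ a local uniformizer (so $\mathrm{ord}(x-p_i)=r$ and $\mathrm{ord}(dx)=r-1$), while over $x=\infty$ there are $\gcd(m,r)=r$ unramified points, since here $m=2r$. Then for the forms $\omega_{a,b}$ with $a\ge 0$ and $1\le b\le r-1$, a local computation gives two constraints: holomorphicity at the branch points forces $b\le r-1$ (automatic in this range), and holomorphicity at each point over infinity, where $\mathrm{ord}(x)=-1$, $\mathrm{ord}(y)=-2$, $\mathrm{ord}(dx)=-2$, forces $2b-a-2\ge 0$, i.e. $0\le a\le 2b-2$. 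Hence for each $b$ there are exactly $2b-1$ such forms, all lying in $V_b^{1,0}$. Summing, $\sum_{b=1}^{r-1}(2b-1)=(r-1)^2$, which by Riemann--Hurwitz equals the genus $g_C=(r-1)^2=\dim H^0(C,\Omega^1_C)$; therefore these forms constitute a basis, and $\dim V_b^{1,0}=2b-1$ for $1\le b\le r-1$, which is the claim with $i=b$.

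The routine but delicate point is the order computation at infinity: because $m=2r$, the polynomial $f$ has degree $2r$, so over $x=\infty$ the cover is unramified with $r$ points, and one must correctly determine $\mathrm{ord}(y)=-2$ and $\mathrm{ord}(dx)=-2$ there in order to reach the sharp bound $a\le 2b-2$ rather than an off-by-one range; this is where the specific value $r=\frac{m}{2}$ enters decisively. A secondary bookkeeping point is the normalization of $\iota$ and of $\zeta_r$, since the opposite choice would instead yield $\dim V_i^{1,0}=2(r-i)-1$; one matches the stated indexing precisely via the choice $\iota^*y=\zeta_r^{-1}y$ made above. Alternatively, the whole eigenspace count is an instance of the Chevalley--Weil (Hurwitz) formula for cyclic covers, from which the same numbers can be read off directly, consistent with the remark that the lemma is well known.
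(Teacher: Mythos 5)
Your proof is correct, and it is genuinely more self-contained than what the paper does: the paper's entire ``proof'' of this lemma is a citation to Lemma 4.2 of Looijenga's survey (the Deligne--Mostow eigenspace count, i.e.\ essentially the Chevalley--Weil formula you mention at the end), whereas you derive the numbers directly from the affine model $y^r=\prod_{i=1}^m(x-p_i)$ with $m=2r$. Your local computations check out: at a finite branch point $y$ is a uniformizer, so $\mathrm{ord}(dx)=r-1$ and holomorphy of $x^a\,dx/y^b$ there only needs $b\le r-1$; over $x=\infty$ the cover is unramified (since $\gcd(m,r)=r$), giving $\mathrm{ord}(x)=-1$, $\mathrm{ord}(y)=-2$, $\mathrm{ord}(dx)=-2$ and hence the sharp condition $0\le a\le 2b-2$; and $\sum_{b=1}^{r-1}(2b-1)=(r-1)^2$ matches the Riemann--Hurwitz genus, so the monomial forms are a basis. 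The statements $\dim V_0=0$ (invariants pull back from $\P^1$) and $\overline{V_i^{1,0}}=V_{r-i}^{0,1}$ (conjugation commutes with the rational operator $\iota^*$ and swaps Hodge types) are handled correctly. Two small points worth making explicit: you should say that after a projective change of coordinates none of the $p_i$ lies at infinity, so the affine model really has all $m$ branch points in the finite chart; and, as you yourself note, the indexing $\dim V_i^{1,0}=2i-1$ versus $2(r-i)-1$ depends on the choice of generator $\iota$, which the paper leaves unspecified, so fixing $\iota^*y=\zeta_r^{-1}y$ is the right way to pin down the stated convention. What your approach buys is a verifiable, elementary argument in place of a black-box reference; what the citation buys the authors is brevity and consistency of conventions with the Deligne--Mostow formalism used later in the paper.
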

\begin{proof}
See for example Lemma 4.2 \cite{Lo}.
\end{proof}
The next lemma is purposed for a later use.
\begin{lemma}\label{decomposition of n-th wedge product}
The weight $n$ $\Q$-PHS $\bigwedge^nV_{\Q}$ admits a decomposition
$$
\bigwedge^nV_{\Q}=W_{1,\Q}\bigoplus W_{2,\Q}
$$
with $W_{1,\Q}\otimes
\Q(\zeta_r)=\bigoplus_{i=1}^{r-1}\bigwedge^nV_i$. Furthermore, only
for $n=3,5,9$, one has a further decomposition of $\Q$-PHS
$$
W_{1,\Q}=W_{unif, \Q}\oplus W_{1,\Q}'
$$
such that
$$
W_{unif,\Q}\otimes \Q(\zeta_r)=\bigwedge^n V_1\bigoplus \bigwedge^n
V_{r-1}.
$$
\end{lemma}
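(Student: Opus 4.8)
The plan is to base change everything to the cyclotomic field $\Q(\zeta_r)$, decompose the exterior power according to the eigenspace decomposition $V_{\Q(\zeta_r)}=\bigoplus_{i=1}^{r-1}V_i$ (note $V_0=0$ by Lemma \ref{decomposition of weight one HS}), and then descend back to $\Q$ by Galois theory. Writing $n=2r-3$ and reading off from Lemma \ref{decomposition of weight one HS} that $\dim V_i=(2i-1)+(2(r-i)-1)=2r-2=n+1$, each diagonal piece $\bigwedge^n V_i$ is nonzero, of dimension $n+1$. The standard formula for the exterior power of a direct sum then gives
$$
\bigwedge^n V_{\Q(\zeta_r)}=\Big(\bigoplus_{i=1}^{r-1}\bigwedge^n V_i\Big)\oplus\Big(\bigoplus_{\substack{a_1+\cdots+a_{r-1}=n\\ \#\{i\,:\,a_i>0\}\geq 2}}\ \bigotimes_{i}\bigwedge^{a_i}V_i\Big),
$$
a direct sum of $\Q(\zeta_r)$-sub-Hodge-structures (tensor products and direct sums of Hodge structures are again Hodge structures). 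I call the first bracket the \emph{pure part} $P$ and the second the \emph{mixed part}.

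The key computation is the Galois action on the eigenspaces. Since $\iota^*$ is defined over $\Q$, it commutes with the $\sigma_a$-semilinear action of $\sigma_a\in\Gal(\Q(\zeta_r)/\Q)$, $\sigma_a(\zeta_r)=\zeta_r^a$; applying $\sigma_a$ to the relation $\iota^*w=\zeta_r^i w$ for $w\in V_i$ yields $\iota^*\sigma_a(w)=\zeta_r^{ai}\sigma_a(w)$, so that $\sigma_a(V_i)=V_{\langle ai\rangle}$, where $\langle\,\cdot\,\rangle$ is the residue modulo $r$ and $a$ runs over $(\Z/r)^\times$. Hence $\Gal(\Q(\zeta_r)/\Q)$ merely permutes the diagonal pieces $\{\bigwedge^n V_i\}_{i=1}^{r-1}$ (it preserves $\{1,\dots,r-1\}$ since $\langle ai\rangle\neq 0$), and likewise permutes the mixed summands among themselves. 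Thus both $P$ and the mixed part are Galois-stable, so by Galois descent $P=W_{1,\Q}\otimes\Q(\zeta_r)$ for the $\Q$-subspace $W_{1,\Q}:=P^{\Gal}=P\cap\bigwedge^n V_\Q$, and the mixed part descends to $W_{2,\Q}$. Moreover $W_{1,\Q}\otimes_\Q\C$ coincides with $P\otimes_{\Q(\zeta_r)}\C$ inside $\bigwedge^n V_\C$ (via the fixed embedding $\Q(\zeta_r)\subset\C$), and the latter is a sub-Hodge-structure; hence $W_{1,\Q}$ and $W_{2,\Q}$ are $\Q$-sub-Hodge-structures, which proves the first decomposition.

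For the refinement the same descent principle applies verbatim: the prescribed summand $\bigwedge^n V_1\oplus\bigwedge^n V_{r-1}$ is the complexification of a $\Q$-sub-Hodge-structure $W_{unif,\Q}\subset W_{1,\Q}$ precisely when it is Galois-stable, and by the permutation action above this holds if and only if the index set $\{1,r-1\}=\{1,-1\}$ is stable under multiplication by every $a\in(\Z/r)^\times$. Since the multiplicative orbit of $1$ is all of $(\Z/r)^\times$, this forces $(\Z/r)^\times=\{\pm1\}$, i.e. $\varphi(r)=2$, i.e. $r\in\{3,4,6\}$; using $n=2r-3$ this is exactly $n\in\{3,5,9\}$, and in each such case the complementary index set is again Galois-stable and its pieces descend to $W_{1,\Q}'$. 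The step that must be handled with care is this final equivalence: one has to pin down the precise permutation $\sigma_a:V_i\mapsto V_{\langle ai\rangle}$ and argue that Galois-stability of the complexification is genuinely \emph{necessary} (not merely sufficient) for rationality, so that the remaining values of $n$ are truly excluded rather than merely not obviously included.
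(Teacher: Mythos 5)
Your proposal is correct and follows essentially the same route as the paper: decompose $\bigwedge^n V_{\Q(\zeta_r)}$ into the diagonal summand $\bigoplus_i\bigwedge^n V_i$ plus the mixed summands, note both are Galois-stable and hence descend to $\Q$-sub-PHSs, and then observe that $\bigwedge^n V_1\oplus\bigwedge^n V_{r-1}$ descends iff the Galois orbit of $\bigwedge^n V_1$ stops there, i.e.\ iff $(\Z/r)^\times=\{\pm1\}$, forcing $r\in\{3,4,6\}$ and $n\in\{3,5,9\}$. Your write-up is in fact slightly more careful than the paper's (explicit permutation $\sigma_a(V_i)=V_{\langle ai\rangle}$ and the check that each $\bigwedge^n V_i$ is nonzero), and the necessity of Galois-stability that you flag is just standard Galois descent, which the paper also invokes as an ``iff.''
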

\begin{proof}
Consider the Galois action on the decomposition
\begin{eqnarray*}
  \bigwedge^n(V_{\Q(\zeta_r)})&=& \bigwedge^n(\bigoplus_iV_i) \\
  &=& \bigoplus_i\bigwedge^nV_i\oplus \bigoplus_{I, |I|\geq 2}\bigwedge^{r_I}V_{I}.
\end{eqnarray*}
Here $I=(i_1,\cdots,i_{|I|})$ for $1\leq i_1<\cdots<i_{|I|}\leq n$
is a multi-index, $r_I=(r_1,\cdots,r_{|I|})$ is a sequence of
nonnegative integers satisfying $\sum r_i=n$ and
$$
\bigwedge^{r_I}V_I=\bigwedge^{r_1}V_{i_1}\otimes \cdots
\bigwedge^{r_{|I|}}V_{i_{|I|}}.
$$
It is clear that $\bigoplus_i\bigwedge^nV_i$ is invariant under the
Galois action. So is the other factor. Thus they underlie
$\Q$-subspaces of $\bigwedge^n V_{\Q}$. Consider furthermore the
Galois orbit of $\bigwedge^nV_1$. It contains at least
$\bigwedge^nV_{r-1}$, and $\bigwedge^nV_1\oplus \bigwedge^nV_{r-1}$
has a sub $\Q$-structure iff the Galois orbit contains no more
direct summand. As the Galois group is isomorphic to the unit group
of $\Z/r$ and it is an elementary fact that the unit group is of
order two only if $r=3,4,6$, it follows that only when $n=3,5,9$,
$\bigwedge^nV_1\oplus \bigwedge^nV_{r-1}$ underlie a sub
$\Q$-structure. The lemma is proved.
\end{proof}
Now we perform a similar construction to the one taken in \S2.3
\cite{GSSZ}. Let $\gamma: (\P^1)^n\to \Sym^n(\P^1)=\P^n$ be the
Galois cover with Galois group $S_n$, the permutation group of $n$
letters, and the identification attaches to a divisor of degree $n$
the ray of its equation in $H^0(\P^1,\sO(n))$.
\begin{lemma}
Put $H_i=\gamma(\{p_i\}\times (\P^1)^{n-1})$. Then
$(H_1,\cdots,H_{m})$ is a hyperplane arrangement in $\P^n$ in
general position.
\end{lemma}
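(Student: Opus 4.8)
The plan is to show that the $m=n+3$ hyperplanes $H_i=\gamma(\{p_i\}\times(\P^1)^{n-1})$ form an arrangement in general position, i.e. that no $n+1$ of them meet simultaneously. First I would make the identification $\Sym^n(\P^1)=\P^n$ concrete: a point of $\P^n$ is the ray in $H^0(\P^1,\sO(n))$ spanned by a degree-$n$ binary form, equivalently an effective divisor $D=q_1+\cdots+q_n$ of degree $n$ on $\P^1$ (with multiplicities). Under $\gamma$, the image $\gamma(\{p_i\}\times(\P^1)^{n-1})$ consists exactly of those divisors $D$ whose support contains the point $p_i$, that is $\{D : p_i\in \mathrm{Supp}(D)\}$, or in terms of forms, those binary forms vanishing at $p_i$. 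Since vanishing at a fixed point $p_i$ is a single linear condition on the $(n+1)$-dimensional space $H^0(\P^1,\sO(n))$, each $H_i$ is indeed a hyperplane in $\P^n$; this identifies $H_i$ with the hyperplane cut out by the linear functional ``evaluate the coefficient / impose a root at $p_i$''.

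The core computation is then the intersection of any subset of these hyperplanes. I would fix indices $i_1,\dots,i_k$ and observe that $H_{i_1}\cap\cdots\cap H_{i_k}$ is precisely the locus of degree-$n$ divisors $D$ whose support contains all of $p_{i_1},\dots,p_{i_k}$ (the $p_i$ being distinct). For $k\le n$ this is nonempty: one can always complete $p_{i_1}+\cdots+p_{i_k}$ to an effective divisor of degree $n$, so these hyperplanes do meet in the expected codimension-$k$ linear subspace. The decisive case is $k=n+1$: a divisor of degree $n$ cannot contain $n+1$ distinct points in its support, so the condition $\{p_{i_1},\dots,p_{i_{n+1}}\}\subset\mathrm{Supp}(D)$ has no solution, whence $H_{i_1}\cap\cdots\cap H_{i_{n+1}}=\emptyset$. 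This is exactly the general-position condition from \S\ref{subsection:branch cover}.

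To make this rigorous rather than merely plausible I would phrase it via linear algebra on $W:=H^0(\P^1,\sO(n))\cong\C^{n+1}$. Choosing an affine coordinate $z$ on $\P^1$ and writing $p_i$ with coordinate $a_i$, the hyperplane $H_i$ is the kernel of the evaluation-type functional $\ell_i\in W^\vee$ sending a form $F$ to its value (suitably homogenized) at $a_i$; in the monomial basis this functional has coordinate vector $(1,a_i,a_i^2,\dots,a_i^n)$. Thus the intersection $H_{i_1}\cap\cdots\cap H_{i_{n+1}}$ is governed by the $(n+1)\times(n+1)$ matrix whose rows are these vectors, which is a Vandermonde matrix in the distinct $a_{i_1},\dots,a_{i_{n+1}}$. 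Since the $p_i$ are distinct the Vandermonde determinant is nonzero, so the $n+1$ functionals $\ell_{i_1},\dots,\ell_{i_{n+1}}$ are linearly independent in $W^\vee$, forcing the common kernel in $\P(W)$ to be empty; for any $k\le n$ the corresponding Vandermonde-type rows are likewise independent, giving a nonempty codimension-$k$ intersection. (One must handle the points at infinity, i.e. $p_i=\infty$, by the standard homogenization, replacing the row by $(0,\dots,0,1)$, which keeps the matrix Vandermonde up to the usual projective normalization.)

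The main obstacle, such as it is, is bookkeeping rather than depth: one must verify carefully that $\gamma(\{p_i\}\times(\P^1)^{n-1})$ really equals the ``support contains $p_i$'' locus and that the evaluation functionals $\ell_i$ are the correct linear equations of the $H_i$ under the chosen isomorphism $\Sym^n(\P^1)\cong\P^n$ — in particular tracking the factor coming from the $S_n$-quotient and the homogenization at $\infty$. Once that dictionary is pinned down, the general-position statement reduces to the nonvanishing of Vandermonde determinants, which is immediate from the distinctness of the $p_i$. I expect no genuine difficulty beyond making the identification precise and treating the point at infinity uniformly.
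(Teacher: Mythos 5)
Your proposal is correct and follows essentially the same route as the paper: identify $H_i$ with the hyperplane of degree-$n$ divisors (equivalently, sections of $\sO_{\P^1}(n)$) whose support contains $p_i$, and observe that a degree-$n$ divisor cannot contain $n+1$ distinct points, so no $n+1$ of the $H_i$ meet. The paper states exactly this in one sentence; your Vandermonde computation is a harmless (and correct) way of making the linear-independence of the evaluation functionals explicit, but it is not needed beyond the two observations above.
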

\begin{proof}
The divisors of degree $n$ in $\P^1$ containing a given point form a
hyperplane and, as a divisor of degree $n$ can not contain $n+1$
distinct points, no $n+1$ hyperplanes in the arrangement do meet.
\end{proof}
In this case, we have more: For any natural number $n$ (not
necessarily odd), we will show that any (ordered) $m$ hyperplane
arrangement in $\P^n$ is projectively equivalent to a(n) (ordered)
one arising from the above way. In fact, we will prove a stronger
statement. Let $\Modulione$ be the moduli space of ordered $n+3$
distinct points in $\P^1$ and similarly $\Modulin$ ordered $n+3$
hyperplane arrangements in $\P^n$ in general position.
\begin{lemma}\label{symmetric map induces isomorphism on the moduli}
The map $\gamma: (\P^1)^n\to \P^n$ induces an isomorphism
$$
\Gamma: \Modulione \simeq \Modulin, \quad [(p_1,\cdots,p_m)]\mapsto
[(H_1,\cdots,H_{m})].
$$
\end{lemma}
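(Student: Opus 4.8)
The plan is to reinterpret $\Gamma$ through the rational normal curve and then invoke the classical fact that $m=n+3$ points in general position lie on a unique such curve. Identifying $\P^n$ with $\P(H^0(\P^1,\sO(n)))$ via the basis $s^n,s^{n-1}t,\dots,t^n$, the hyperplane $H_i=\gamma(\{p_i\}\times(\P^1)^{n-1})$ is exactly the locus of degree-$n$ forms vanishing at $p_i=[a_i:b_i]$, i.e. the hyperplane with coefficient vector $v_i=(a_i^n,a_i^{n-1}b_i,\dots,b_i^n)$. Thus $p\mapsto H_p$ is the dual rational normal curve $\nu_n\colon\P^1\to(\P^n)^\vee$, $[a:b]\mapsto[a^n:\cdots:b^n]$. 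Since, by duality, $\Modulin$ is canonically the moduli of $m$ points in general position in $(\P^n)^\vee$ modulo $\mathrm{PGL}_{n+1}$, the map $\Gamma$ becomes $[(p_1,\dots,p_m)]\mapsto[(\nu_n(p_1),\dots,\nu_n(p_m))]$, and I would carry out the proof entirely in this language.

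First I would check that $\Gamma$ is a well-defined morphism. The $n+1$ hyperplanes $H_{i_0},\dots,H_{i_n}$ fail to have a common point precisely when $v_{i_0},\dots,v_{i_n}$ are linearly independent, and the corresponding determinant is the generalized Vandermonde product $\prod_{j<k}(a_{i_j}b_{i_k}-a_{i_k}b_{i_j})$, which is nonzero exactly when the $p_{i_j}$ are pairwise distinct. Hence distinct points yield an arrangement in general position, so $\Gamma$ indeed lands in $\Modulin$. Moreover the $\Sym^n$-representation furnishes a homomorphism $\rho\colon\mathrm{PGL}_2\to\mathrm{PGL}_{n+1}$ with $\nu_n(h\cdot p)=\rho(h)\cdot\nu_n(p)$, so projectively equivalent point configurations produce projectively equivalent arrangements and $\Gamma$ descends to the quotients as a morphism.

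The core is bijectivity, which I would deduce from two classical facts about $C_n=\nu_n(\P^1)$: any $m$ points in general position in $(\P^n)^\vee$ lie on a \emph{unique} rational normal curve, and every rational normal curve is a $\mathrm{PGL}_{n+1}$-translate of $C_n$ with $\Aut(\P^n,C_n)=\rho(\mathrm{PGL}_2)$. For surjectivity: given points $\ell_1,\dots,\ell_m$ in general position, let $C=g(C_n)$ be the unique curve through them and set $p_i=\nu_n^{-1}(g^{-1}\ell_i)\in\P^1$; these are distinct and $\Gamma([(p_i)])=[(\ell_i)]$. For injectivity: if $g\cdot\nu_n(p_i)=\nu_n(p_i')$ for all $i$, then $C_n$ and $g^{-1}(C_n)$ both pass through the $m\geq n+3$ points $\nu_n(p_i)$ in general position, so uniqueness forces $g^{-1}(C_n)=C_n$, whence $g=\rho(h)$ and $p_i'=h\cdot p_i$; thus the two configurations are equal in $\Modulione$.

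To upgrade the bijection to an isomorphism of varieties, I would exhibit the inverse as a morphism by writing the unique curve through a general configuration down algebraically, e.g. as the common zero locus of the $2\times2$ minors of a Hankel matrix whose entries are linear forms determined by the configuration, or via the explicit frame in which the curve is $t\mapsto[1/(t-a_1):\cdots:1/(t-a_{n+1})]$ with the $a_i$ rational functions of the coordinates; then $\ell_i\mapsto\nu_n^{-1}(g^{-1}\ell_i)$ defines $\Gamma^{-1}$ as a morphism. The hard part will be precisely this classical existence-and-uniqueness of the rational normal curve through $n+3$ general points together with its algebraic dependence on the configuration; by contrast the Vandermonde computation for general position, the $\rho$-equivariance, and the computation $\Aut(\P^n,C_n)=\rho(\mathrm{PGL}_2)$ are standard and present no real obstacle.
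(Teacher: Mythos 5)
Your proof is correct, but it takes a genuinely different route from the one in the paper. The paper argues by brute force in explicit affine coordinates: it normalizes a point of $\Modulione$ to $(0,t_1,\dots,t_{n-1},\infty,1,t_n)$ and a point of $\Modulin$ to a standard $(n+1)\times(n+3)$ matrix with parameters $(s_1,\dots,s_n)$, writes down the Vandermonde-type matrix defining the image arrangement, and uses Cramer's rule to produce the closed formula $\Gamma(t_1,\dots,t_n)=\bigl(\tfrac{t_n(t_1-1)}{t_1-t_n},\dots,\tfrac{t_n(t_{n-1}-1)}{t_{n-1}-t_n},t_n\bigr)$, from which the isomorphism is read off. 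You instead dualize, recognize $p\mapsto H_p$ as the rational normal curve in $(\P^n)^\vee$, and reduce everything to the classical existence and uniqueness of the rational normal curve through $n+3$ points in general position together with $\Aut(\P^n,C_n)=\rho(\mathrm{PGL}_2)$; this is conceptually cleaner and explains \emph{why} the two moduli spaces agree, at the cost of importing nontrivial classical input and of some extra care in upgrading the bijection to an isomorphism of varieties (though in characteristic zero a bijective morphism onto the smooth variety $\Modulin$ is automatically an isomorphism by Zariski's main theorem, so your final paragraph is doing more work than strictly needed). The paper's computation has the side benefit, exploited in Section 4, of fixing the explicit affine coordinates $(s_1,\dots,s_n)$ on $\Modulin$ used to write down the universal family; your argument would leave that normalization to be set up separately.
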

\begin{proof}
We adopt an elementary but direct proof. The two moduli spaces admit
affine descriptions: Any ordered $m=n+3$ distinct points in $\P^1$
is transformed into a unique tuple
$(0,t_1,\cdots,t_{n-1},\infty,1,t_n)$ with $t_i\in \C$ and
$\Modulione$ is therefore identified with the complement of the
following hyperplanes in $\C^n=\Spec\ \C[t_1,\cdots,t_n]$ defined by
$$
\{t_i, t_i-1, t_i-t_j, 1\leq i,j\leq n\}.
$$
Similarly, the following matrix represents a unique point in
$\Modulin$:
$$
\left(
  \begin{array}{ccccc}
    1 & \cdots & 0 & 1 & 1 \\
    0 & \ddots & 0 & 1 & s_1 \\
    \vdots & \ddots& \vdots & \vdots & \vdots \\
    0 & \cdots & 1 & 1 & s_n \\
  \end{array}
\right)_{n+1\times n+3},
$$
where $(s_1,\cdots,s_n)$ is a point in $\C^n=\Spec\
\C[s_1,\cdots,s_n]$ away from the union of hyperplanes
$$
\{s_i, s_i-1, s_i-s_j, 1\leq i,j\leq n\}.
$$
The following claim implies the lemma:
\begin{claim}
Under the above affine coordinates, one has
$$
\Gamma(t_1,\cdots,t_{n-1},t_n)=(\frac{t_{n}(t_1-1)}{t_1-t_{n}},\cdots,\frac{t_{n}(t_{n-1}-1)}{t_{n-1}-t_{n}},t_n).
$$
\end{claim}
\begin{proof}
The following Vandermonde-type matrix of size $(n+1)\times (n+3)$
give the defining equations of the hyperplane arrangement
corresponding to $$(0,t_1,\cdots,t_{n-1},\infty,1,t_n)$$ under
$\gamma$:
$$
A=\left(
  \begin{array}{ccccccc}
    1 & 1&\cdots &1 & 0 & 1 & 1 \\
    0 & t_1&\cdots&t_{n-1} & 0 & 1 & t_n \\
    \vdots&\vdots & \vdots&\vdots& \vdots & \vdots & \vdots \\
    0 &t_1^{n}& \cdots & t_{n-1}^{n}&1 & 1 & t_n^n \\
  \end{array}
\right).
$$
The first $n+1$ columns make the square matrix
$$
B=\left(
  \begin{array}{ccccc}
    1 & 1&\cdots &1 & 0  \\
    0 & t_1&\cdots&t_{n-1} & 0  \\
    \vdots&\vdots & \vdots&\vdots& \vdots  \\
    0 &t_1^{n}& \cdots & t_{n-1}^{n}&1  \\
  \end{array}
\right).
$$
By the Cramer's rule, one uses the determinant of a Vandermonde
matrix to determine the vector
$$
(\lambda_1,\cdots,\lambda_{n+1})^t=B^{-1}(1,\cdots,1)^t,
$$
as well as the vector
$$
(\mu_1,\cdots,\mu_{n+1})^t=B^{-1}(1,t_n,\cdots,t_{n}^n)^t.
$$
Put $D=\rm{diag}\{\lambda_1,\cdots,\lambda_{n+1}\}$. Then the
invertible $(n+1)\times (n+1)$ matrix
$P=\lambda_1\mu_1^{-1}D^{-1}B^{-1}$ transforms the hyperplane
arrangement defined by $A$ to the one by columns of the matrix
$$
\left(
  \begin{array}{ccccc}
    1 & \cdots & 0 & 1 & 1 \\
    0 & \ddots & 0 & 1 & \frac{t_{n}(t_1-1)}{t_1-t_{n}} \\
    \vdots & \cdots& \vdots & \vdots & \vdots \\
    0& \cdots& 0&1& \frac{t_{n}(t_{n-1}-1)}{t_{n-1}-t_{n}}\\
    0 & \cdots & 1 & 1 & t_n \\
  \end{array}
\right).
$$
\end{proof}
\end{proof}
Let $p: C\to \P^1$ as above. The $n$-fold product
$$
h: C^{n}\stackrel{p^n}{\longrightarrow}
(\P^1)^{n}\stackrel{\gamma}{\longrightarrow} \P^n
$$
is a Galois cover with Galois group $N \rtimes S_n$, where
$N=\langle\iota_1,\cdots,\iota_n\rangle$ is the group generated by
the cyclic automorphisms on factors. The group $N$ has a natural
normal subgroup $N'$ given by the kernel of the trace map
$$
N\simeq (\Z/r)^{\times n}\stackrel{\sum}{\longrightarrow}\Z/r.
$$
It has a set of generators $\{\delta_i\}_{1\leq i\leq r-1}$ with
$$ \delta_i=\langle id, \cdots,
\iota,\iota^{-1},\cdots, id\rangle,
$$
where $\iota$ appears at the $i$-th component and $\iota^{-1}$ the
$i+1$-th component and the identity elsewhere. Consider the quotient
$C^n/G$ with $G=N'\rtimes S_n$. Similar to Lemma 2.8 loc. cit., one
checks that the natural map $C^n/G\to \P^n$ induced by $h$ is a
Galois cover with Galois group $\Z/r$ and its branch locus is
exactly $H_1+\cdots+H_{m}$. As the Picard group of a projective
space has no torsion, one concludes that $C^n/G$ is isomorphic to
the $r$-fold cyclic cover $X_C$ of $\P^n$ branched along
$\sum_iH_i$.

\subsection{The Abel-Jacobi map and the Hodge structure of the cyclic cover}
Recall that (see e.g. Ch. 2 \cite{GH}) the Abel-Jacobi map $\phi:
C\to \Jac(C)=\C^g/\Lambda$ is defined by the period integral
$$
q\mapsto (\int_{q_0}^{q}\omega_1,\cdots,\int_{q_0}^{q}\omega_g),
$$
where $q_0\in C$ is a chosen base point and $\{\omega_i\}_{1\leq
i\leq g}$ is a basis of holomorphic one forms on $C$. Here $\Lambda$
denotes for the period lattice of $C$. Let $(z_1,\cdots,z_g)$ be the
standard coordinates of $\C^g$. Then one sees that $\phi^*$ induces
an isomorphism $H^1(\Jac(C),\Q)\simeq H^1(C,\Q)$ such that
$\phi^*(dz_i)=\omega_i, 1\leq i\leq g$. The Abel-Jacobi map induces
the natural morphism
$$
\phi_n: C^n\to \Jac(C),\  (q_1,\cdots,q_n)\mapsto
\sum_{i=1}^n\phi(q_i).
$$
We are in the situation to study the following diagram of morphisms:
$$
\xymatrix{
   &C^n \ar[d]_{\delta} \ar[r]^{\phi_n} & \Jac(C).  \\
   X_C\ar[r]^{\simeq} &  C^n/G  &    }
$$
Here the map $\delta: C^n\to C^n/G$ is the natural projection. Note
that after Lemma \ref{symmetric map induces isomorphism on the
moduli} any $r$-fold cyclic cover $X$ in \S\ref{subsection:branch
cover} is isomorphic to an $X_C$. The consequence of the morphisms
on the level of $\Q$-PHS is summarized in the following
\begin{proposition}\label{isomorphism of PHS with W}
The morphisms in the above diagram induces an isomorphism of
$\Q$-PHS:
$$
H^n(X_C,\Q)\simeq W_{1,\Q},
$$
where $W_{1,\Q}$ is the PHS in Lemma \ref{decomposition of n-th
wedge product}.
\end{proposition}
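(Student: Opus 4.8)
The plan is to compute $H^n(X_C,\Q)$ as the $G$-invariants of $H^n(C^n,\Q)$ and then to match this with $W_{1,\Q}$. Since $X_C\simeq C^n/G$ with $G=N'\rtimes S_n$ and $\delta\colon C^n\to C^n/G$ is the quotient by a finite group, the pullback $\delta^*$ yields an isomorphism of $\Q$-Hodge structures $H^n(X_C,\Q)\xrightarrow{\sim}H^n(C^n,\Q)^{G}$. It thus suffices to locate the right-hand side inside $H^n(C^n,\Q)$ and to recognize it, via the Abel--Jacobi map, as the prescribed sub-Hodge structure of $\bigwedge^nH^1(C,\Q)$.

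First I would pass to $\Q(\zeta_r)$-coefficients and use the Künneth decomposition $H^n(C^n)=\bigoplus_{a_1+\cdots+a_n=n}H^{a_1}(C)\otimes\cdots\otimes H^{a_n}(C)$, a decomposition of Hodge structures on which $G$ acts. The automorphism $\iota$ acts trivially on $H^0(C)$ and $H^2(C)$ and with eigenspaces $V_i$ on $H^1(C)$; by Lemma~\ref{decomposition of weight one HS} one has $V_0=0$, so $H^1(C)_{\Q(\zeta_r)}=\bigoplus_{i=1}^{r-1}V_i$. Reading off the $N$-character of a Künneth summand, invariance under $N'=\ker(\mathrm{trace})$ forces that character to factor through the trace map; a summand containing an even-degree factor then forces the trivial character, hence all its $H^1$-factors into $V_0=0$, which is impossible, while a summand built only from even-degree factors has even total degree and so cannot equal the odd number $n$. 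This is the one place where the hypothesis that $n$ is odd enters decisively. Consequently $H^n(C^n)^{N'}$ is contained in the top Künneth summand $H^1(C)^{\otimes n}=V^{\otimes n}$, where it equals $\bigoplus_{i=1}^{r-1}V_i^{\otimes n}$.

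Next I would take $S_n$-invariants. Because $H^1(C)$ sits in odd degree, the geometric permutation action of $S_n$ on $V^{\otimes n}$ carries the Koszul sign, so a transposition of two factors acts by $(-1)$ times the naive swap; its invariants are therefore the alternating tensors, and $(V_i^{\otimes n})^{S_n}=\bigwedge^nV_i$. Hence
$$
H^n(C^n,\Q(\zeta_r))^{G}=\bigoplus_{i=1}^{r-1}\bigwedge^nV_i=W_{1,\Q}\otimes\Q(\zeta_r),
$$
the last equality being the defining property of $W_{1,\Q}$ in Lemma~\ref{decomposition of n-th wedge product}. Since every step is equivariant for $\Gal(\Q(\zeta_r)/\Q)$ and the $G$-action is defined over $\Q$, Galois descent gives $H^n(C^n,\Q)^{G}=W_{1,\Q}$ as $\Q$-Hodge structures.

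Finally, the Abel--Jacobi map pins down the ambient identification. Writing $\phi_n=\mu\circ\phi^{\times n}$ with $\mu$ the sum map on $\Jac(C)$, one has $\mu^*=\sum_i p_i^*$ on $H^1$, whence $\phi_n^*(\beta_1\wedge\cdots\wedge\beta_n)=\bigwedge_j(\sum_i\mathrm{pr}_i^*\alpha_j)$ with $\alpha_j=\phi^*\beta_j$. Composing with the Künneth projection $\mathrm{pr}_{\mathrm{top}}$ onto $V^{\otimes n}$ kills all terms with a repeated index and leaves exactly the alternating-tensor image of $\alpha_1\wedge\cdots\wedge\alpha_n$; thus $\mathrm{pr}_{\mathrm{top}}\circ\phi_n^*$ is, up to a nonzero scalar, the canonical injection $\bigwedge^nH^1(C)=H^n(\Jac C)\hookrightarrow V^{\otimes n}$, a morphism of Hodge structures. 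Under it the invariants computed above correspond precisely to $W_{1,\Q}\subset\bigwedge^nH^1(C,\Q)$. Combining this with $\delta^*$, and using that $\phi_n^*$, $\mathrm{pr}_{\mathrm{top}}$ and $\delta^*$ are all morphisms of Hodge structures, produces the desired isomorphism $H^n(X_C,\Q)\simeq W_{1,\Q}$. The main obstacle is the bookkeeping that isolates the top Künneth piece: the Koszul-sign analysis turning $S_n$-invariants into exterior powers, together with the vanishing of every lower summand, which relies jointly on $V_0=0$ and on $n$ being odd, and the verification that $\mathrm{pr}_{\mathrm{top}}\circ\phi_n^*$ realizes the alternating inclusion so that the $\Q$-structure is exactly $W_{1,\Q}$ rather than merely abstractly isomorphic to it.
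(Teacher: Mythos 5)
Your argument is correct, and its skeleton --- the identification $H^n(X_C,\Q)\simeq H^n(C^n,\Q)^{G}$, the K\"unneth decomposition of $H^n(C^n,\Q)$, and the use of the Abel--Jacobi map to realize $\bigwedge^nH^1(C,\Q)$ inside $H^1(C,\Q)^{\otimes n}$ as alternating tensors --- is the same as the paper's. The decisive step, however, is handled differently. The paper only establishes that $\kappa\circ\phi_n^*$ is injective on $W_{1,\Q}$ with $G$-invariant image (the $N'$-invariance being checked by letting each generator $\delta_i$ act on $\pi_1^*\alpha_{\nu(1)}\wedge\cdots\wedge\pi_n^*\alpha_{\nu(n)}$, where it multiplies by $\zeta_r^{k}\zeta_r^{-k}=1$), and then concludes by a dimension count: both $W_{1,\Q}$ and $H^n(X_C,\Q)$ have dimension $2(r-1)^2$, the latter coming from Lemma \ref{Hodge number}, which itself rests on the smooth Kummer cover $Y$ and the results of Terasoma and Dolgachev--Kond\=o. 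You instead compute $H^n(C^n,\Q(\zeta_r))^{G}$ outright: the character-of-$N$ analysis kills every K\"unneth summand containing an even-degree factor (using $V_0=0$ together with the oddness of $n$), and the Koszul-sign discussion identifies the $S_n$-invariants of $V_i^{\otimes n}$ with $\bigwedge^nV_i$; your $N'$-invariance of the image is then automatic, since the alternating image of $\bigwedge^nV_i$ sits in the constant-character eigenspace. This buys self-containedness --- no appeal to Lemma \ref{Hodge number} is needed, and your computation re-derives the Hodge numbers of $X_C$ as a byproduct --- at the cost of somewhat more bookkeeping; the paper's route is shorter given that Lemma \ref{Hodge number} is already available for other purposes. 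Both arguments correctly insert the K\"unneth projection before testing $G$-invariance, which is genuinely necessary because $\phi_n$ itself is not $N'$-invariant as a morphism.
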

\begin{proof}
By labeling the curve factors of $C^n$ by $C_i, 1\leq i\leq n$, we
obtain from the K\"{u}nneth decomposition a decomposition of
$\Q$-PHS:
$$
H^n(C^n,\Q)=\bigotimes_{i=1}^{n}H^1(C_i,\Q)\oplus \bigotimes_{1\leq
i\neq j\leq n}[ H^0(C_i,\Q)\otimes
H^2(C_j,\Q)\otimes\bigotimes_{1\leq k\leq n, k\neq i,j}
H^1(C_k,\Q)].
$$
Denote $\kappa: H^n(C^n,\Q)\to \bigotimes_{i=1}^{n}H^1(C_i,\Q)$ for
the projection onto the first factor. We claim that the composite
$$
W_{1,\Q}\subset \bigwedge^nH^1(C,\Q)\stackrel{\phi^*}{\simeq}
H^n(\Jac(C),\Q)\stackrel{\phi_n^*}{\longrightarrow}H^n(C^n,\Q)\stackrel{\kappa}{\longrightarrow}
\bigotimes_{i=1}^{n}H^1(C_i,\Q)
$$
is injective and the image is $G$-invariant. Assuming the claim, the
result follows: As
$$
H^n(X_C,\Q)\simeq H^n(C^n/G,\Q)=H^n(C^n,\Q)^G
$$ and $W_{1,\Q}$ have the same dimensions by Lemmas
\ref{Hodge number}, \ref{decomposition of weight one HS} (both are
equal to $2(r-1)^2$), the injective map $W_{1,\Q}\hookrightarrow
H^n(C^n,\Q)^G$ of $\Q$-PHSs is indeed an isomorphism. Notice that by
Lemma \ref{decomposition of weight one HS} we can take the basis
$\{\omega_i\}_{1\leq i\leq g}$ in the above description of $\phi$ as
eigenvectors with respect to the action of $\iota$ on $V_{\C}$. This
implies that the corresponding $dz_i$ to $\omega_i$ is an
eigenvector with the same eigenvalue with respect to the induced
natural action of $\iota$ on $\Jac(C)$. This will make the
verification of the invariance of the resulting class under
$G$-action straightforward. Let $\pi_i: C^n\to C, 1\leq i\leq n$ be
the $i$-th projection. Then the induced map $\phi_n^*$ on the level
of differential one forms is given by
$$
\phi_n^*(dz_i)=\sum_{l=1}^{n}\pi_l^*\omega_i, \quad \phi^*(d\bar
z_i)=\sum_{l=1}^{n}\pi_l^*\bar \omega_i, \ 1\leq i\leq g.
$$
Therefore the map $\phi_n^*: H^n(\Jac(C),\C)\to H^n(C^n,\C)$ on the
degree $n$ cohomology groups is given by sending $ [dz_{i_1}\wedge
\cdots\wedge dz_{i_p}\wedge d\bar z_{j_1}\wedge \cdots \wedge d\bar
z_{j_{n-p}}]$ to
$$ [\sum_l
\pi_l^*\omega_{i_1}\wedge \cdots \sum_l \pi_l^*\omega_{i_p}\wedge
\sum_l \pi_l^*\bar \omega_{j_1}\wedge \cdots \wedge\sum_l
\pi_l^*\bar \omega_{j_{n-p}}].
$$
The bracket means the cohomology class of the differential form. Now
for each direct factor $\wedge^nV_k\subset W_{1,\Q}\otimes
\Q(\zeta_r)$, we claim that the image of its element under
$\kappa\circ\phi_n^*$ is invariant under $G$-action. Note that the
map $\phi_n$ factors as
$$
C^n\stackrel{/S_n}{\longrightarrow}S^nC\to \Jac(C).
$$
Thus it suffices to show the invariant property under the subgroup
$N'$ of $G$. It is also equivalent to show this property for
elements in $\wedge^n(V_k\otimes \C)$, which for dimension reason,
is just equal to
$$
\wedge^{2k-2}V_k^{1,0}\otimes \wedge^{n-2k+2}V_k^{0,1}\oplus
\wedge^{2k-1}V_k^{1,0}\otimes \wedge^{n-2k+1}V_k^{0,1}.
$$
Let $\{dz_{i_1},\cdots,dz_{i_{2k-1}}\}$ (resp. $\{d\bar
z_{j_1},\cdots,d\bar z_{j_{n-2k+2}}\}$) be the basis of $V_k^{1,0}$
(resp. $V_k^{0,1}$). Consider the image under $\phi_n^*$ of a
typical element (omitting $dz_{i_{2k-1}}$ in the wedge product)
$$[\alpha]:=[dz_{i_1}\wedge\cdots\wedge dz_{i_{2k-2}}\wedge d\bar z_{j_1}\wedge
\cdots d\bar z_{j_{n-2k+2}}]\in \wedge^{2k-2}V_k^{1,0}\otimes
\wedge^{n-2k+2}V_k^{0,1}.$$ For clarity we set
$$
\alpha_1=\omega_{i_1},\cdots,\alpha_{2k-2}=\omega_{i_{2k-2}},\alpha_{2k-1}=\bar
\omega_{j_1},\cdots,\alpha_{n}=\bar \omega_{j_{n-2k+2}}.
$$
Then it follows that
$$
\kappa\circ\phi_n^*([\alpha])=\sum_{\nu\in
S_n}(-1)^{\rm{Sign}(\nu)}[\pi_1^*\alpha_{\nu(1)}\wedge \cdots\wedge
\pi_n^*\alpha_{\nu(n)}].
$$
(For $n=3\mod 4$, it holds that
$\kappa\circ\phi_n^*([\alpha])=\phi_n^*([\alpha])$.) Now one sees
its invariance under $N'$-action immediately: One tests simply the
action of any generator $\delta_i$ of $N'$ and it acts on
$\pi_1^*\alpha_{\nu(1)}\wedge \cdots\wedge \pi_n^*\alpha_{\nu(n)}$
by multiplying $\zeta_{r}^k\zeta_{r}^{-k}=1$. This completes the
proof.
\end{proof}

\section{Maximal families of CY manifolds with length one Yukawa coupling}
Our aim in this section is to exhibit families of CY manifolds with
claimed properties, and make some complements to these families at
the end.\\
Recall that in the proof of Lemma \ref{symmetric map induces
isomorphism on the moduli} we have explained that $\Modulin$ is
identified with an open subset of $\C^n$. Call it $U_n$ with
coordinates $s=(s_1,\cdots,s_n)$. Let $[x_0:\cdots:x_{n}]$ be the
homogenous coordinates of $\P^n$. Put
$$t_1=x_0,\cdots,t_{m-2}=x_{n},t_{m-1}=\sum_{i=0}^{n}x_i,t_m=x_0+\sum_{i=1}^ns_ix_i.$$
They give $m$ sections of $\sO(1)$ whose zero divisors in $\P^n$
meet transversally. Let $p_i, i=1,2$ be the projection of ${\rm
Tot}(L)\times U_n$ to the $i$-th factor. Define $$\sX_n\subset {\rm
Tot}(L)\times U_n$$ to be the zero locus of the section
$$
p_1^*s^r-\bigotimes_{i=1}^{m}(p\circ p_1)^*t_i\in \Gamma({\rm
Tot}(L)\times U_n,(p\circ p_1)^*\sO(m)),
$$
and $f_n: \sX_n\to U_n$ the composite $\sX_n\subset {\rm
Tot}(L)\times U_n\stackrel{p_2}{\to} U_n$. The is the family of
$r$-fold cyclic covers of $\P^n$ branched along a universal family
of $n+3$ hyperplane arrangements of $\P^n$ in general position. We
can do the simultaneous crepant resolution of the family $f_n$: The
divisors of ${\rm Tot}(L)\times U_n$ given by
$$
\tilde{E}_{0}:=(p_{1}^{*}s=0), \tilde{E}_{1}:=((p\circ
p_1)^*t_{1}=0),\cdots, \tilde{E}_{m}:=((p\circ p_1)^*t_{m}=0)
$$
meet transversally and the hypersurface $\mathcal{X}$ is binomial
with respect to
$$
(\mathfrak{E}=\{\tilde{E}_{0}\},
\mathfrak{F}=\{\tilde{E}_{1},\cdots, \tilde{E}_{m}\}).
$$
By Theorem \ref{resolution theorem}, we can apply the crepant
resolution algorithm to $\mathcal{X}$ and get a simultaneous crepant
resolution of $f_n$:
$$
\xymatrix{
  \tilde{\mathcal{X}} \ar[rr]^{ } \ar[dr]_{ }
                &  &   \sX_n \ar[dl]^{f_n}    \\
                & U_n                }
$$
This is not unique and a different choice leads to a fiberwise
birationally equivalent family of CY manifolds. We choose one and
call $\tilde f_n: \tilde \sX_n\to U_n=\Modulin$ \emph{the} family of
CY manifolds by our construction.\\
We can also consider $\Modulione$ and obtain a family $g_n: \sC\to
\Modulione$ whose fibers are $r$-fold cyclic covers of $\P^1$
branched along $n+3$ distinct points. The $n$-th self product
$(g_n)^n: \sC^n\to \Modulione$ of $g_n$ admits a natural action of
$G$ which acts fiberwisely as what we have described at the end of
\S\ref{cyclic cover of P^1}. For
$$
h_n: \sC^n/G\stackrel{\overline{(g_n)^n}}{\longrightarrow}
\Modulione\stackrel{\Gamma}{\longrightarrow}\Modulin,
$$
there is an isomorphism
$$
\xymatrix{
  \sC^n/G \ar[rr]^{\simeq} \ar[dr]_{h_n}
                &  & \sX_n    \ar[dl]^{f_n}    \\
                & \Modulin                 }
$$
Some notations before the main computational result: Write
$\V_{\Q}=R^1g_{n*}\Q$. By Lemma \ref{decomposition of weight one
HS}, the fiberwise cyclic automorphism induces a decomposition of
$\Q(\zeta_r)$-PVHS:
$$
\V_{\Q}\otimes \Q(\zeta_r)=\bigoplus_{i=1}^{r-1}\V_i.
$$
By Lemma \ref{decomposition of n-th wedge product}, there is a
decomposition of $\Q$-PVHSs:
$$
\bigwedge^n\V_\Q=\W_{1,\Q}\bigoplus \W_{2,\Q}
$$
such that $\W_{1,\Q}\otimes
\Q(\zeta_r)=\bigoplus_{i=1}^{r-1}\bigwedge^n\V_i$. Write
$\HH_n=R^n{\tilde f}_{n*}\Q$.
\begin{theorem}\label{structure of PVHS of the family}
Let $\tilde f_n: \tilde \sX_n\to \Modulin$ be the family of
$n$-dimensional CY manifolds. Then one has an isomorphism of
$\Q$-PVHSs:
$$
\HH_n\simeq \W_{1,\Q}.
$$
\end{theorem}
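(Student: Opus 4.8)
The plan is to globalize, over the connected base $\Modulin$, the chain of pointwise isomorphisms already assembled in the previous sections, namely
$$
H^n(\tilde X,\Q)\xleftarrow{\ \sigma^*\ }H^n(X,\Q)\simeq H^n(X_C,\Q)\simeq W_{1,\Q},
$$
coming from Proposition \ref{thm:Hodge structures under crepant resolution}, the identification $X\simeq X_C$, and Proposition \ref{isomorphism of PHS with W}. The guiding principle is that a morphism of $\Q$-PVHS over a connected base is an isomorphism as soon as it is an isomorphism on a single fiber; since $\Modulin$ is identified with an open subset of $\C^n$ and is hence connected, it suffices to produce each arrow above as a morphism of $\Q$-PVHS defined relatively over $\Modulin$ and to invoke the already established fiberwise statements. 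Every geometric map used in the pointwise arguments --- the simultaneous crepant resolution, the quotient presentation $\sX_n\simeq\sC^n/G$, the relative Abel--Jacobi map and the K\"unneth projection --- is defined in families, so each induces the desired morphism of local systems, while the hard combinatorial input (the $G$-invariance computation) has already been carried out fiberwise.

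First I would treat the resolution and the quotient presentation. The simultaneous crepant resolution $\tilde\sX_n\to\sX_n$ over $\Modulin$ has, fiberwise, the strong-resolution property of Theorem \ref{resolution theorem}, so pullback furnishes a morphism of local systems $R^n f_{n*}\Q\to\HH_n$ which by Proposition \ref{thm:Hodge structures under crepant resolution} is an isomorphism on each fiber, hence an isomorphism. To see that $R^n f_{n*}\Q$ is itself a $\Q$-PVHS despite the singular fibers, I would use the family version of Lemma \ref{HS of X as a quotient of Y}: realizing the cyclic cover fiberwise as $Y/N_1$ exhibits $R^n f_{n*}\Q$ as the $N_1$-invariant sub-object of the middle cohomology of the associated smooth family of Kummer covers, which manifestly carries a pure polarized variation. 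Next, the isomorphism $\sC^n/G\simeq\sX_n$ of families over $\Modulin$ displayed via $h_n$ and $\Gamma$ gives an isomorphism of $\Q$-PVHS $R^nf_{n*}\Q\simeq R^n\,\overline{(g_n)^n}_*\Q$, and the fiberwise identity $H^n(C^n/G,\Q)=H^n(C^n,\Q)^G$ (cohomology of a finite quotient with $\Q$-coefficients) upgrades to $R^n\,\overline{(g_n)^n}_*\Q\simeq\big(R^n\,(g_n)^n_{*}\Q\big)^G$.

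It remains to realize $\W_{1,\Q}$ inside $\big(R^n\,(g_n)^n_{*}\Q\big)^G$ in families. Here I would form the relative Jacobian $\mathcal J\to\Modulione$ of the family $g_n$ together with the relative Abel--Jacobi map and its $n$-fold sum $\Phi_n:\sC^n\to\mathcal J$, exactly as in the pointwise diagram preceding Proposition \ref{isomorphism of PHS with W}. The relative K\"unneth decomposition of $R^n\,(g_n)^n_{*}\Q$ singles out the top factor $\V_\Q^{\otimes n}$, and the composite of $\Phi_n^*$ with the K\"unneth projection $\kappa$ carries the sub-PVHS $\W_{1,\Q}\subset\bigwedge^n\V_\Q$ into $R^n\,(g_n)^n_{*}\Q$. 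All of these are morphisms of $\Q$-PVHS because the underlying maps are morphisms of families over $\Modulione$; the fiberwise computation in Proposition \ref{isomorphism of PHS with W} shows that the resulting morphism $\W_{1,\Q}\to\big(R^n\,(g_n)^n_{*}\Q\big)^G$ is injective with image the full $G$-invariant part, and is an isomorphism on each fiber for dimension reasons (both sides have fiber dimension $2(r-1)^2$). Concatenating the arrows and transporting through $\Gamma$ then yields $\HH_n\simeq\W_{1,\Q}$.

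The main obstacle is not the combinatorics --- that is already dispatched fiberwise in Propositions \ref{thm:Hodge structures under crepant resolution} and \ref{isomorphism of PHS with W} --- but the bookkeeping required to make each step a morphism of \emph{polarized variations} rather than merely a fiberwise isomorphism of Hodge structures. Concretely, the delicate points are endowing $R^n f_{n*}\Q$ with a PVHS structure across the singular fibers (handled via the smooth Kummer family and $N_1$-invariants) and checking that the relative Abel--Jacobi construction and relative K\"unneth projection respect the Hodge filtrations globally, so that connectedness of $\Modulin$ can be invoked to promote the fiberwise isomorphisms to a genuine isomorphism of $\Q$-PVHS.
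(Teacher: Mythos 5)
Your proof is correct and follows essentially the same route as the paper, whose entire proof of this theorem is the one-line citation of Propositions \ref{thm:Hodge structures under crepant resolution} and \ref{isomorphism of PHS with W} --- precisely the two fiberwise inputs you identify. The additional bookkeeping you supply (that each arrow is induced by a morphism of families over the connected base $\Modulin$, so the fiberwise isomorphisms of polarized Hodge structures globalize to an isomorphism of $\Q$-PVHS) is exactly the content the paper leaves implicit.
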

\begin{proof}
It follows from Propositions \ref{thm:Hodge structures under crepant
resolution} and \ref{isomorphism of PHS with W}.
\end{proof}
\begin{remark}
Note that, for and only for $n=3,5,9$, the $\Q(\zeta_r)$-PVHS
$$\bigwedge^n\V_1\bigoplus \bigwedge^n\V_{r-1}$$ underlies a sub
$\Q$-PVHS $\W_{unif,\Q}$ of $\W_{1,\Q}$. Write $\HH_{unif,\Q}$ to be
the sub $\Q$-PVHS of $\HH_n$ corresponding to $\W_{unif,\Q}$ under
the above isomorphism.
\end{remark}
\begin{corollary}
The following statements are true:
\begin{itemize}
    \item [(i)] The family $\tilde f_n$ is maximal.
    \item [(ii)] $\varsigma(\tilde f_n)=1$. Consequently, Shafarevich's
    conjecture holds true for $\tilde f_n$.
    \item [(iii)] A suitable partial compactification of the family $\tilde f_3$ is a Shimura family of $U(1,3)$-type.
    \item [(iv)] For $n=5,9$, the sub $\Q$-PVHS
    $\HH_{unif,\Q}\subset \HH_n$ gives a uniformization to a Zariski open subset of an arithmetic ball
    quotient.
\end{itemize}
\end{corollary}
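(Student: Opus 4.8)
The plan is to transport every assertion to the weight-one variation $\V_\Q=R^1g_{n*}\Q$ of the curve family through the isomorphism $\HH_n\simeq\W_{1,\Q}$ of Theorem \ref{structure of PVHS of the family}, and then to read off the Hodge-theoretic data from Lemma \ref{decomposition of weight one HS}. The first computation I would carry out is the Hodge decomposition of each eigenfactor appearing in $\W_{1,\Q}\otimes\Q(\zeta_r)=\bigoplus_{i=1}^{r-1}\bigwedge^n\V_i$. Since $\dim\V_i^{1,0}=2i-1$ and $\dim\V_i^{0,1}=2(r-i)-1$, counting the nonzero summands of $\bigwedge^p\V_i^{1,0}\otimes\bigwedge^{n-p}\V_i^{0,1}$ shows that $\bigwedge^n\V_i$ carries exactly the two \emph{adjacent} Hodge types $(2i-1,n-2i+1)$ and $(2i-2,n-2i+2)$, so it has Hodge level one. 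For $i=r-1$ these are $(n,0)$ and $(n-1,1)$, with $h^{n,0}=1$ and $h^{n-1,1}=n$, matching the Hodge numbers of $\HH_n$ recorded in Lemma \ref{Hodge number}.

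For (ii) I would observe that the fibrewise cyclic automorphism acts trivially on $\Omega_{\Modulin}$, so the Higgs field of $\HH_n$ respects the $\Q(\zeta_r)$-eigendecomposition and restricts to each $\bigwedge^n\V_i$. Since $E^{n,0}$ is one-dimensional and sits in the single factor $\bigwedge^n\V_{r-1}$, which has no $(n-2,2)$-part, the iterated field $\theta^2$ starting from $E^{n,0}$ lands in a zero space; hence $\theta^2=0$, while non-isotriviality forces $\theta\neq 0$, giving $\varsigma(\tilde f_n)=1$, from which Shafarevich's conjecture follows by the cited criterion for length-one families. For maximality (i) the count $\dim\Modulin=n=h^{n-1,1}(\tilde X)$ (Lemma \ref{Hodge number} and Proposition \ref{thm:Hodge structures under crepant resolution}) reduces the claim to injectivity of the Kodaira--Spencer map. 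I would identify this map, via the top factor $\bigwedge^n\V_{r-1}$, with contraction by the weight-one Higgs field $\theta_{\V_{r-1}}\colon\V_{r-1}^{1,0}\to\V_{r-1}^{0,1}\otimes\Omega_{\Modulin}$: as $\V_{r-1}^{0,1}$ is a line, contraction of the one-dimensional $\bigwedge^n\V_{r-1}^{1,0}$ in a tangent direction $v$ is injective precisely when $\theta_{\V_{r-1}}$ does not kill $v$. Thus maximality is equivalent to the period map of the eigen-subsystem $\V_{r-1}$, of Hodge type $(n,1)$, being an immersion, which is part of the Deligne--Mostow description of these cyclic covers.

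For (iii) and (iv) I would invoke \cite{DM} and \cite{Mostow0}--\cite{Mostow}. With $\mu=(\tfrac1r,\dots,\tfrac1r)$ and $\sum\mu_i=2$, the period domain of $\V_{r-1}$ is the ball $\B^n$ and the period map $\Modulin\to\Gamma\backslash\B^n$ is an open immersion onto a Zariski-open subset, with $\Gamma$ arithmetic exactly when $r\in\{3,4,6\}$, i.e. $n\in\{3,5,9\}$, where $\Q(\zeta_r)$ is imaginary quadratic; this is the range isolated in Lemma \ref{decomposition of n-th wedge product}. For $n=3$ one has $\W_{1,\Q}=\W_{unif,\Q}=\bigwedge^3\V_1\oplus\bigwedge^3\V_{r-1}$, a weight-three $\Q$-Hodge structure with Hodge numbers $(1,3,3,1)$ and $\Q(\zeta_3)$-action of signature $(1,3)$; I would then extend the family across the boundary configurations to obtain the partial compactification realising $\tilde f_3$ as a Shimura family of $U(1,3)$-type. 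For $n=5,9$ only $\W_{unif,\Q}=\bigwedge^n\V_1\oplus\bigwedge^n\V_{r-1}$ descends to $\Q$, with Hodge numbers concentrated in the four extreme types $(n,0),(n-1,1),(1,n-1),(0,n)$; the corresponding $\HH_{unif,\Q}$ has period domain $\B^n$ and uniformizes a Zariski-open subset of the arithmetic ball quotient, which is (iv).

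The main obstacle I anticipate lies in part (iii): upgrading the open period embedding to an honest Shimura \emph{family} requires extending $\tilde f_n$ across the boundary divisors parametrising degenerate but admissible configurations and verifying that the extended variation is the standard $U(1,3)$-variation on the arithmetic quotient; the arithmeticity of $\Gamma$ and the immersion property, though essential, are imported wholesale from Deligne--Mostow and Mostow. A secondary point of care is the reduction used in (i): one must check that the Higgs field genuinely preserves the $\Q(\zeta_r)$-eigendecomposition and that the top-wedge contraction correctly realises the Kodaira--Spencer map, so that its injectivity is governed by the single eigen-subsystem $\V_{r-1}$ rather than by the whole of $\W_{1,\Q}$.
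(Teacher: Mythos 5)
Your proposal follows essentially the same route as the paper: transport everything to the curve family via Theorem \ref{structure of PVHS of the family}, observe that each eigenfactor $\bigwedge^n\V_i$ carries only two adjacent Hodge types (so the iterated Higgs field from $E^{n,0}$ dies after one step), deduce maximality from the Deligne--Mostow/Looijenga fact that $\eta_{r-1}$ is an isomorphism, and quote \cite{DM}, \cite{Mostow0}, \cite{Mostow} for (iii) and (iv). The one point to correct is your assertion that the monodromy is discrete and arithmetic ``exactly when $r\in\{3,4,6\}$, where $\Q(\zeta_r)$ is imaginary quadratic'': in the paper that condition on $r$ only governs the $\Q$-rationality of $\W_{unif,\Q}$ (Lemma \ref{decomposition of n-th wedge product}), whereas discreteness and arithmeticity are established separately by checking that $\mu=(\tfrac1r,\dots,\tfrac1r)$ occurs in Mostow's $\mathbf{\sum INT}$ list (entries 1, 8, 23), and the paper's closing remark explicitly records the agreement of these two conditions as an unexplained coincidence rather than an implication.
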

\begin{proof}
Let $(F,\eta)$ be the corresponding Higgs bundle to $\V_\Q$,
$(E,\theta)$ to $\HH_n$. The above theorem on PVHS implies the
corresponding result on Higgs bundles, that is,
$$
(F,\eta)=\bigoplus_{i=1}^{r-1}(F_i,\eta_i),\quad (E,\theta)\simeq
\bigoplus_{i=1}^{r-1}\bigwedge^n(F_i,\theta_i).
$$
Lemma \ref{decomposition of weight one HS} implies that
$$
\rank(F_i^{1,0})=2i-1,\quad \rank(F_i^{0,1})=n+2-2i.
$$
Adding the information on the Hodge components, one has a more
explicit description of the Higgs bundle $(E,\theta)$ which is equal
to
$$
(E^{n,0}\oplus E^{n-1,1},\theta^{n,0})\bigoplus (E^{n-2,2}\oplus
E^{n-3,3},\theta^{n-2,2})\bigoplus \cdots\bigoplus (E^{1,n-1}\oplus
E^{0,n},\theta^{1,n-1}),
$$
together with isomorphisms of Higgs bundles
\begin{eqnarray*}
  (E^{n,0}\oplus E^{n-1,1},\theta^{n,0})&\simeq & (\wedge^{n}F^{1,0}_{r-1}\oplus
\wedge^{n-1}F_{r-1}^{1,0}\otimes
F_{r-1}^{0,1},\wedge^n\eta_{r-1}), \\
  &\cdots&   \\
    (E^{1,n-1}\oplus E^{0,n},\theta^{1,n-1})&\simeq& (\wedge^{n-1}F^{1,0}_1\otimes F^{0,1}_1\oplus
\wedge^nF^{0,1}_1,\wedge^n\eta_1).
\end{eqnarray*}
Note that the family $g_n: \sC\to \Modulione$ is in connection with
the theory of Deligne-Mostow \cite{DM} (see also \cite{Lo}). It
follows from Proposition 3.9 loc. cit. that
$$
\eta_{r-1}: F^{1,0}_{r-1}\to F^{0,1}_{r-1}\otimes
\Omega_{\Modulione}
$$
is an isomorphism. So is $\theta^{n,0}$. This shows (i). It follows
also that $\varsigma(\tilde f_n)=1$ as $$\theta^{n-1,1}:
E^{n-1,1}\to E^{n-2,2}\otimes \Omega_{\Modulin}$$ is simply zero. As
$\tilde f_n$ is maximal, the length of the Yukawa coupling of the
corresponding coarse moduli is also equal to one. By Theorem 6
\cite{LTYZ2}, Shafarevich's conjecture holds for this coarse moduli.
This shows (ii). The eigen-PVHS $\V_1$ is a special case of
$\C$-PVHSs over $\Modulione$ studied by Deligne-Mostow loc. cit.. In
our case,
$$\mu=(\mu_1=\frac{1}{r},\cdots,\mu_{m}=\frac{1}{r})$$ in the
notation of loc. cit.. It is shown in \S10 loc. cit. that when the
condition $\mathbf{INT}$ (see Theorem 3.11 loc. cit. for its
meaning) is satisfied, the monodromy representation of $\V_1$ has
discrete image in $\Aut(\B^n)$. The condition to guarantee the
discreteness of the image has been relaxed by Mostow \cite{Mostow0}
to $\mathbf{\sum INT}$ which turns out to be also sufficient by
Mostow \cite{Mostow}. In the appendix of \cite{Mostow}, he gives
also a complete list of $\mu$ satisfying $\mathbf{\sum INT}$ for
$n\geq 3$. Checking the list, one finds immediately that there are
exactly three cases labeled as No.1, No. 8 and No. 23 which have
equal $\mu_i$s, and they come exactly from $\V_1$ for $n=3,5,9$.
Moreover, in these three cases, the monodromy groups are actually
arithmetic. Note that
$$\HH_{unif,\Q}\otimes \Q(\zeta_r)\simeq (\V_1\bigoplus \V_{r-1})\otimes \Q(\zeta_r)(2-r),$$
and $\V_1, \V_{r-1}$ are dual to each other. The period map of
$\HH_{unif,\Q}$ gives an open embedding of $\Modulin$ into an
arithmetic ball quotient. For the case $n=3$, it is even more
special. In this case, one has
$$
\HH_3=\HH_{unif,\Q}\simeq \V_{\Q}\otimes \Q(-1).
$$
By Deligne-Mostow \cite{DM}, the family $g_3:\sC\to
\mathfrak{M}_{1,6}$ can be partially compactified so that the image
of its period map in the moduli space of principally polarized
Abelian 4-folds with a suitable level structure is an arithmetic
quotient of $\B^3$. See a recent work \cite{Moon} by B. Moonen on
the classification of Shimura subvarieties in the Jacobian locus of
moduli spaces of principally polarized abelian varieties arising
from cyclic covers of $\P^1$. This example appears as No. (10) in
Table 1 loc. cit.. It is quite obvious that the partial
compactification of $g_3$ yields the one of $f_3$ and also of
$\tilde f_3$. The detail is omitted since what it will involve is
not closely relevant to the paper. This completes the proof.
\end{proof}
We conclude the paper with the following remark.
\begin{remark}
(i). For $n=3$, the Hodge numbers of $\tilde X$ read
    $h^{1,1}=51,h^{2,1}=3$. J. Rohde has constructed in his doctor
    thesis (see \cite{Rohde}) a maximal family of CY 3-folds with
    the same Hodge numbers which is also a Shimura family. Note that
    the parameter of his family comes also from $\mathfrak{M}_{1,6}$. Are these two families birationally
    equivalent?\\
    (ii). One constructs more maximal families of CY manifolds from
    the moduli space of hyperplane arrangements in a projective
    space. Do our families exhaust all possibilities with length
    one Yukawa coupling?\\
    (iii). We have shown that $\HH_{unif,\Q}\subset \HH_n$ exists
    only for $n=3,5,9$, which are the unique three cases appeared in
    Mostow's list with equal $\mu_i$s. Is there some deeper reason than
a mere coincidence?
\end{remark}

\end{document}